\newcommand{\ncom}{\newcommand}
\ncom{\beqn}{\begin{eqnarray*}}
	\ncom{\eeqn}{\end{eqnarray*}}
\newtheorem{theorem}{Theorem}[section]
\newtheorem{proposition}[theorem]{Proposition}
\newtheorem{lemma}[theorem]{Lemma}
\theoremstyle{definition}
\newtheorem{definition}[theorem]{Definition}
\theoremstyle{definition}
\theoremstyle{remark}
\newtheorem{remark}[theorem]{Remark}
\title{Characterizations of approximation properties defined by operator ideals in the predual of weighted Banach spaces of holomorphic functions}
\author[Deepika Baweja]{Deepika Baweja} 
\address{Deepika Baweja, Department of Mathematics, BITS Pilani, Hyderabad Campus, India-500078.}
\email{deepika@hyderabad.bits-pilani.ac.in}
\author[Manjul Gupta]{Manjul Gupta}
\address{Manjul Gupta, Department of Mathematics and Statistics, IIT Kanpur, India-208016.}
\email{manjul@iitk.ac.in}
\begin{document}
\maketitle{\title}
\begin{abstract}
	In this article, we show that the predual $\mathcal{G}_w(U)$ of the
	weighted space of holomorphic functions
	has the $\mathcal{I}$- approximation
	property if and only if $E$ has the $\mathcal{I}$- approximation
	property, where $\mathcal{I}$ is a suitably chosen operator ideal, and
	$\it{w}$ is a radial weight defined on a balanced open subset U of a Banach
	space $E$.
	
\end{abstract}

\noindent{\sc\textsl{Keywords}:} holomorphic mappings, weighted
spaces of holomorphic functions,
linearization, approximation property.\\
\noindent {\sc\textsl{MSC 2010}:} 46G20, 46E50, 46B28

\section{Introduction}
Approximation property is one of the most fundamental  properties in
the theory of Banach spaces as it approximates the identity operator
by finite rank operators uniformly on compact subsets of the Banach
space. This notion appeared for the first time in Banach's book in
1932 and later a systematic study of this property along with its
variants was carried out by A. Grothendieck \cite{Grothendieck} in 1955, who showed
the importance of the approximation property in the structural study
of Banach spaces. After the appearance of the work of P. Enflo \cite{Enflo},
who constructed an example of a Banach space lacking the
approximation property, researches in approximation property gained
momentum. Several variants of this property have been introduced and
studied extensively; for instance, compact approximation property,
weakly compact approximation property etc.. Replacing the ideals of
finite rank/compact/weakly compact operators by an arbitrary
operator ideal $\mathcal{I}$, S. Berrios and G. Botelho \cite{Sonia} studied
the concept of $\mathcal{I}$-approximation property which means that
the identity operator on $E$ is uniformly approximated by a member
of $\mathcal{I}\equiv\mathcal{I}(E,E)$ on compact subsets of $E$. This yields a
unification of several variants of the approximation property. In
\cite{Sonia}, the authors studied the $\mathcal{I}$-approximation
property in spaces of holomorphic functions of bounded type, spaces
of weakly uniformly continuous holomorphic functions, spaces of
bounded holomorphic functions and thus generalized some of the
results obtained in \cite{Caliskan, Mujica}.  Approximation properties 
more general than $\mathcal{I}$- approximation Property  defined corresponding to convex subsets of class of bounded linear operators have also
been investigated; see \cite{Bhar} and  \cite {Lissitsin}.

In our recent work \cite{DB1, DB2, DB3},  we considered the approximation
property and some of its variants for the predual of the weighted
spaces of holomorphic functions defined on open subsets of Banach
spaces. We continue this study in the present work. We aim at studying
some new characterizations of the bounded and compact approximation
property besides dealing with the $\mathcal{I}$-approximation property
for the predual of the weighted space of holomorphic mappings.

In Section 2, we give some basic notations, terminology and the results to be used in the sequel. In the next section we show that a Banach space $E$ has the compact approximation property if and only the inclusion map defined on a balanced open subset $U$ of $E$ can be approximated by a weighted holomorphic map with compact range.  Also, we  obtain some new characterizations of the bounded approximation property in terms of weighted spaces of holomorphic mappings.
Finally, in the last section, we prove the main result, namely - a
Banach space $E$ has the $\mathcal{I}$-approximation property if and
only if the predual $\mathcal{G}_w(U)$ has the
$\mathcal{I}$-approximation property for a suitably chosen operator ideal $\mathcal{I}$ and weight $\it{w}$ defined on a balanced open subset $U$ of a Banach space $E$.
\section{Preliminaries}
Throughout this paper, we shall use the letters $E$ and
$F$ to denote the complex Banach spaces and 
the symbol  $E^*$ to denote the topological
dual of $E$. We denote by $U$ a non-empty open subset of $E$ and by
$U_{E}$ the open unit ball of $E$.
The symbol $B^{\lambda}_
E$ denotes the closed ball of $E$ consisting of the elements with
norm $\leq \lambda$. For $\lambda=1$, $B^1_
E=B_E$ is the closed unit ball of $E$.
For  $m\in\mathbb{N}$, $\mathcal{P}(
^mE, F)$ denotes the Banach space of all continuous m-homogeneous
polynomials from $E$ to $F$.
A continuous polynomial $P$ is a
mapping from $E$ into $F$ which can be represented as a sum $P = P_0 + P_1 +\dots + P_l$
with $P_m \in \mathcal{P}
(^mE, F)$ for $m = 0, 1,\dots, l.$ The vector space of all continuous polynomials
from $E$ into $F$ is denoted by $\mathcal{P}(E, F)$. 
A  polynomial $P\in \mathcal{P}(^mE,F)$ is said to be of \emph{finite
	type} if it is of the form
$$
P(x)=\sum_{j=1}^{l}\phi_{j}^{m}(x)y_j,~x\in E, $$ where $\phi_j \in
E^*$ and $y_j \in F$, $1\leq j\leq l$. We denote by $
\mathcal{P}_f(^mE,F)$
the space of finite type
polynomials from $E$ into $F$.  A  polynomial $P\in \mathcal{P}(^mE,F)$ is said to be \emph{compact} if $P(B_E)$ is relatively compact in $F$. We denote by $\mathcal{P}_k(^mE,F)$ the space of all compact $m$-homogeneous polynomials. For $m=1$, 
$\mathcal{P}_k(^1E,F) \equiv \mathcal{K}(E; F)$ ($\mathcal{P}_f(^1E,F) \equiv \mathcal{F}(E; F)$) is the space of all compact (finite) linear operators
from $E$ to $F$.

A mapping $f:U\rightarrow F$ is said to be
\emph{holomorphic}, if for each $\xi\in U$, there exists a ball
$B(\xi, r)$ with center at $\xi$ and radius $r> 0$, contained in $U$
and a sequence $\{P_{m}\}_{m=0}^{\infty}$ of polynomials with $P_{m}
\in \mathcal{P}(^{m}E,F)$, $m\in \mathbb{N}_0$ such that
\begin{equation}
	f(x)=\sum_{m=0}^{\infty}P_{m}(x-\xi)= \sum_{m=0}^{\infty} \frac{1}{m!} \hat{d}^{m} f(\xi) (x-\xi),
\end{equation}
where the series converges uniformly for each $x\in B(\xi,r)$. The vector space of all holomorphic mappings from $U$ to $F$ is denoted by
$\mathcal{H}(U,F)$ and the compact open topology on $\mathcal{H}(U,F)$, the topology of uniform convergence on compact subsets of $U$, is denoted by  $\tau_{0}$. In case $U=E$,
the class $\mathcal{H}(E,F)$ is the space of entire mappings from
$E$ into $F$. For $F=\mathbb{C}$, we write $\mathcal{H}(U)$ for
$\mathcal{H}(U,\mathbb{C})$.

A subset $A$ of $U$ is said to be \emph{$U$-bounded} if $A$ is bounded
and there exists a neighborhood $V$ of $0$ such that $A+V\subset U$. A mapping $f$ in $\mathcal{H}(U,F)$ is of \emph{bounded
	type} if it maps $U$-bounded sets to bounded sets in $F$. The space of
holomorphic mappings of bounded type is denoted by
$\mathcal{H}_b(U,F)$. The space $\mathcal{H}_b(U,F)$ endowed with
the topology $\tau_b$, the topology of uniform convergence on
$U$-bounded sets, is a Fr\'{e}chet space, cf.\cite{Barroso}. We refer
to~\cite{Barroso, Dineen1, Dineen2, Mujica1, Nachbin}
for notations and background on infinite dimensional
holomorphy.

A \emph{weight} $\textit{w}$ on $U$ is a
continuous and strictly positive function satisfying
\begin{equation}
	0< \inf_{A}\textit{w}(x) \leq \sup_{A}\textit{w}(x)<\infty
\end{equation}
for each $U$-bounded set $A$. A weight $\textit{w}$ defined on an
open balanced subset $U$ of $E$ is said to be \emph{radial} if
$\textit{w}(tx)=\textit{w}(x)$ for all $x\in U$ and $t\in
\mathbb{C}$ with $|t|=1$; and in case of $U=E$ it is said to be \emph{rapidly
	decreasing} if $\sup\limits_{x\in E}{w(x)\|x\|^m}<\infty$ for each
$m\in \mathbb{N}_0$. Corresponding to a weight function \emph{w}, the weighted space of holomorphic
functions is defined as
$$\mathcal{H}_w(U;F)=\{f\in \mathcal{H}(U;F):\|f\|_w= \sup_{x\in
	U}\textit{w}(x)\|f(x)\|< \infty\}.$$ The space $(\mathcal{H}_w(U;F),
\|\cdot\|_w)$ is a Banach space and $B_w$ denotes its closed unit
ball. For $F=\mathbb{C}$, we write $\mathcal{H}_w(U)=
\mathcal{H}_w(U,\mathbb{C})$. It can be easily seen that the norm
topology $\tau_{\|\cdot\|_w}$ on $\mathcal{H}_w(U,F)$ is finer than
the topology induced by $\tau_0$, the topology of uniform convergence on compact subsets of $U$.
Since the closed unit ball $B_w$ of
$\mathcal{H}_w(U)$ is $\tau_0$-compact, the
predual of $\mathcal{H}_w(U)$ is given by
$$\mathcal{G}_w(U)=\{\phi \in \mathcal{H}_w(U)^\prime: \phi |B_w
~\textrm{is}~ \tau_0 -\textrm{continuous}\}
$$
by the Ng Theorem, cf.\cite{Ng}.\\
Let us quote the following notations and results from \cite{DB1}.
\begin{proposition}\label{poly containment in weighted holo}
	Let \it{w} be a weight on an open subset $U$ of a Banach space $E$. Then, for each $m \in\mathbb{N}$, the following are equivalent:
	
	(a) $\mathcal{P}(^mE, F)\subset \mathcal{H}_w(U, F)$ for each Banach space $F$.
	
	(b)  $\mathcal{P}(^mE)\subset \mathcal{H}_w(U)$.
\end{proposition}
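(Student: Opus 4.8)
The plan is to prove the equivalence $(a)\Leftrightarrow(b)$ by first observing that the implication $(a)\Rightarrow(b)$ is trivial, since $(b)$ is the special case $F=\mathbb{C}$ of $(a)$. The substance is therefore in $(b)\Rightarrow(a)$: assuming every scalar-valued $m$-homogeneous polynomial is bounded with respect to the weighted norm, I must show the same for $F$-valued polynomials, for an arbitrary Banach space $F$.

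First I would reduce the question to a quantitative one. The statement $\mathcal{P}(^mE)\subset\mathcal{H}_w(U)$ says that every $P\in\mathcal{P}(^mE)$ satisfies $\sup_{x\in U}\textit{w}(x)|P(x)|<\infty$. Since $\mathcal{P}(^mE)$ is a Banach space under $\|\cdot\|$ and $\mathcal{H}_w(U)$ is a Banach space under $\|\cdot\|_w$, and since norm convergence in either space implies pointwise convergence on $U$, the inclusion map $\mathcal{P}(^mE)\hookrightarrow\mathcal{H}_w(U)$ has closed graph; by the closed graph theorem there is a constant $C=C(m,w)>0$ with $\|P\|_w\le C\|P\|$ for all $P\in\mathcal{P}(^mE)$, i.e.
\begin{equation*}
\textit{w}(x)\,|P(x)|\le C\,\|P\|\qquad (x\in U,\ P\in\mathcal{P}(^mE)).
\end{equation*}

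Next I would transfer this estimate to the vector-valued setting via linear functionals. Let $P\in\mathcal{P}(^mE,F)$ and $\psi\in F^*$ with $\|\psi\|\le 1$. Then $\psi\circ P\in\mathcal{P}(^mE)$ and $\|\psi\circ P\|\le\|P\|$, so for every $x\in U$,
\begin{equation*}
\textit{w}(x)\,|\psi(P(x))|\le C\,\|\psi\circ P\|\le C\,\|P\|.
\end{equation*}
Taking the supremum over $\psi\in B_{F^*}$ and invoking the Hahn–Banach theorem (which gives $\|P(x)\|=\sup_{\|\psi\|\le1}|\psi(P(x))|$) yields $\textit{w}(x)\,\|P(x)\|\le C\,\|P\|$ for all $x\in U$, hence $\|P\|_w\le C\|P\|<\infty$ and $P\in\mathcal{H}_w(U,F)$. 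This proves $(b)\Rightarrow(a)$.

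I do not anticipate a serious obstacle here; the only point needing a little care is the justification that the inclusion $\mathcal{P}(^mE)\hookrightarrow\mathcal{H}_w(U)$ is well-defined and closed, so that the closed graph theorem applies — this rests on the remark (recorded earlier in the excerpt) that $\tau_{\|\cdot\|_w}$ is finer than $\tau_0$ and on the elementary fact that $\|\cdot\|$-convergence of polynomials forces uniform, hence pointwise, convergence on bounded sets. An alternative to the closed graph theorem, if one prefers to avoid it, is to apply the uniform boundedness principle to the family $\{P\mapsto \textit{w}(x)P(x)/\|P\| : x\in U\}$ of functionals on $\mathcal{P}(^mE)$, each of which is bounded by hypothesis (a) applied pointwise is not quite enough — one genuinely needs the hypothesis as stated, that the $\|\cdot\|_w$-norm is finite, to get pointwise boundedness of this family); either route gives the constant $C$.
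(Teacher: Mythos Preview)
The paper does not actually prove this proposition: it is quoted from \cite{DB1} in the preliminaries section, so there is no in-paper argument to compare yours against. Your proof is correct. The implication $(a)\Rightarrow(b)$ is indeed trivial, and for $(b)\Rightarrow(a)$ the closed graph theorem legitimately yields a constant $C$ with $\|P\|_w\le C\|P\|$ for all $P\in\mathcal{P}(^mE)$ (both spaces are Banach, and convergence in either norm implies pointwise convergence on $U$, so the graph of the inclusion is closed); the Hahn--Banach step then transfers this bound to $\mathcal{P}(^mE,F)$ exactly as you wrote.

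One small cleanup: in your alternative via the uniform boundedness principle, the family to consider is $\{P\mapsto w(x)P(x):x\in U\}\subset\mathcal{P}(^mE)^*$, not $P\mapsto w(x)P(x)/\|P\|$ --- dividing by $\|P\|$ destroys linearity. Hypothesis $(b)$ is precisely the statement that this family of evaluation functionals is pointwise bounded, and the principle then gives the uniform constant $C$. Your final paragraph is somewhat garbled around this point (the sentence beginning ``each of which is bounded by hypothesis (a) applied pointwise is not quite enough'' does not parse) and should be rewritten; but the mathematical content is sound.
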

\begin{theorem}(Linearization Theorem)\label{Linearization Theorem}
	For an open subset $U$ of a Banach space $E$ and a weight \it{w}
	on $U$, there exists a Banach space $\mathcal{G}_w(U)$ and a mapping
	$\Delta_{w}\in \mathcal{H}_w(U, \mathcal{G}_w(U))$ with $\|\Delta_w\|_w\leq 1$ satisfying the
	following property: for each Banach space $F$ and each mapping $f\in
	\mathcal{H}_w(U,F)$, there is a unique operator $T_f\in
	\mathcal{L}(\mathcal{G}_w(U), F)$ such that $T_f\circ\Delta_w=f$.
	The correspondence $\Psi$ between $\mathcal{H}_w(U,F)$ and
	$\mathcal{L}(\mathcal{G}_w(U), F)$ given by
	$$\Psi(f)=T_f$$
	is an isometric isomorphism. The space $\mathcal{G}_w(U)$ is
	uniquely determined upto an isometric isomorphism by these
	properties.
\end{theorem}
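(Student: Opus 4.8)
The plan is to realize $\mathcal{G}_w(U)$ concretely as the norm-closed linear span in $\mathcal{H}_w(U)^\prime$ of the evaluation functionals and then to verify the universal property by hand. First I would define the \emph{evaluation map} $\Delta_w\colon U\to \mathcal{H}_w(U)^\prime$ by $\Delta_w(x)(g)=g(x)$ for $g\in\mathcal{H}_w(U)$. For $g$ in the closed unit ball $B_w$ one has $|g(x)|\le \|g\|_w/w(x)\le 1/w(x)$, so $\|\Delta_w(x)\|\le 1/w(x)$, i.e.\ $w(x)\|\Delta_w(x)\|\le 1$; in particular $\Delta_w(x)$ is a bounded functional whose restriction to $B_w$ is $\tau_0$-continuous (point evaluations are $\tau_0$-continuous), hence $\Delta_w(x)\in\mathcal{G}_w(U)$ in the sense of the Ng theorem. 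I would then take $\mathcal{G}_w(U)$ to be the closure of $\operatorname{span}\{\Delta_w(x):x\in U\}$; the Ng theorem identifies this space, with its canonical pairing, as an isometric predual of $\mathcal{H}_w(U)$, so that $\mathcal{G}_w(U)^\prime=\mathcal{H}_w(U)$ isometrically and $\mathcal{H}_w(U)$ separates the points of $\mathcal{G}_w(U)$.

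The next step is to check $\Delta_w\in\mathcal{H}_w(U,\mathcal{G}_w(U))$. The estimate above gives $\|\Delta_w\|_w\le 1$, so only holomorphy remains. For each $g\in\mathcal{H}_w(U)=\mathcal{G}_w(U)^\prime$ the scalar function $g\circ\Delta_w$ is simply $g$, which is holomorphic on $U$; since $\mathcal{H}_w(U)$ separates points of $\mathcal{G}_w(U)$ and $\Delta_w$ is (globally) bounded, the weak-holomorphy theorem for vector-valued maps yields $\Delta_w\in\mathcal{H}(U,\mathcal{G}_w(U))$, and with $\|\Delta_w\|_w\le 1$ we conclude $\Delta_w\in\mathcal{H}_w(U,\mathcal{G}_w(U))$.

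For the factorization, given a Banach space $F$ and $f\in\mathcal{H}_w(U,F)$, I would define $T_f$ first on the dense subspace $\operatorname{span}\{\Delta_w(x)\}$ by $T_f\bigl(\sum_i\lambda_i\Delta_w(x_i)\bigr)=\sum_i\lambda_i f(x_i)$. Well-definedness and the bound $\|T_f\|\le\|f\|_w$ follow from
$$
\Bigl\|\sum_i\lambda_i f(x_i)\Bigr\|
=\sup_{\psi\in B_{F^*}}\Bigl|\sum_i\lambda_i\psi(f(x_i))\Bigr|
=\sup_{\psi\in B_{F^*}}\Bigl|\Bigl(\sum_i\lambda_i\Delta_w(x_i)\Bigr)(\psi\circ f)\Bigr|
\le\Bigl\|\sum_i\lambda_i\Delta_w(x_i)\Bigr\|\,\|f\|_w ,
$$
using that $\psi\circ f\in\mathcal{H}_w(U)$ with $\|\psi\circ f\|_w\le\|f\|_w$ whenever $\|\psi\|\le 1$. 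Thus $T_f$ extends uniquely to $T_f\in\mathcal{L}(\mathcal{G}_w(U),F)$ with $T_f\circ\Delta_w=f$ and $\|T_f\|\le\|f\|_w$; the reverse inequality $\|f\|_w=\|T_f\circ\Delta_w\|_w\le\|T_f\|\,\|\Delta_w\|_w\le\|T_f\|$ makes $\Psi(f):=T_f$ isometric. Uniqueness of $T_f$ is immediate from density of the evaluations; $\Psi$ is linear; it is injective since $T_f=0$ forces $f=T_f\circ\Delta_w=0$; and it is surjective because for $T\in\mathcal{L}(\mathcal{G}_w(U),F)$ the map $T\circ\Delta_w$ lies in $\mathcal{H}_w(U,F)$ (holomorphic as a composition, and $w(x)\|T(\Delta_w(x))\|\le\|T\|\,w(x)\|\Delta_w(x)\|\le\|T\|$) with $T_{T\circ\Delta_w}=T$ by density. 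Uniqueness of $\mathcal{G}_w(U)$ up to isometric isomorphism is the standard diagram chase: given another pair $(\widetilde G,\widetilde\Delta)$ with the same property, applying each universal property to the linearizing map of the other produces norm-one operators that are mutually inverse, hence isometries.

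The step I expect to be the main obstacle is the verification that $\Delta_w$ is holomorphic as a $\mathcal{G}_w(U)$-valued map: this is the one place where the right vector-valued (weak) holomorphy theorem and care about which dual one tests against are genuinely needed, as opposed to the formal manipulations elsewhere; a clean route is to establish holomorphy into $\mathcal{H}_w(U)^\prime$ first, using local boundedness to upgrade weak-$*$ information, and then observe that the range already lies in the closed span, namely $\mathcal{G}_w(U)$. A secondary technical point is confirming, via the Ng theorem together with the containment facts of Proposition~\ref{poly containment in weighted holo}, that $\mathcal{G}_w(U)$ as defined is an isometric predual of $\mathcal{H}_w(U)$, so that all the duality pairings used above are legitimate.
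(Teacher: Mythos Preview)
The paper does not supply its own proof of this theorem: it is quoted from \cite{DB1} in the preliminaries section (``Let us quote the following notations and results from \cite{DB1}''), so there is no in-paper argument to compare against. Your outline is essentially the standard linearization proof in the spirit of Mujica \cite{Mujica} for $\mathcal{H}^\infty$, adapted to weights as in \cite{DB1} and \cite{Jorda}, and it is correct in substance.

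One small imprecision: you write that $\Delta_w$ is ``(globally) bounded'', but in general only $w\Delta_w$ is globally bounded; $\Delta_w$ itself is merely \emph{locally} bounded, since each point of $U$ has a $U$-bounded neighbourhood (Lemma~\ref{Lemma}) on which $w$ is bounded away from zero by the weight axiom (2.2). Local boundedness is precisely what the weak-to-strong holomorphy criterion requires, so the argument still goes through, but the wording should be corrected. Your remark that the identification of the closed linear span of the $\Delta_w(x)$ with the Ng predual is the genuine technical point is well placed; this is exactly what is established in \cite[Lemma~7]{Beltran}, to which the present paper also appeals later.
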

We write
$$\mathcal{H}_w(U) \otimes F=\{f\in \mathcal{H}_w(U,F):f ~\textrm{has 
	finite dimensional range}\}$$
and
$$
\mathcal{H}_w^c(U,F)= \{f\in \mathcal{H}_w(U,F):wf ~\textrm{has 
	relatively compact range}\}.
$$
The next proposition establishes the interplay between the
properties of a mapping $f\in \mathcal{H}_w(U,F)$ and  the
corresponding operator $T_f\in \mathcal{L}(\mathcal{G}_w(U),F)$.
\begin{proposition}\label{Interplay of holo and linear}
	Let $U$ be an open subset of a Banach space $E$ and $\it{w}$ be a weight on $U$. Then for any Banach space $F$, we have
	
	$(a)$ $f\in \mathcal{H}_w(U) \otimes F$ if and only if $T_f\in \mathcal{F}(\mathcal{G}_w(U),F)$.
	
	$(b)$ $f\in \mathcal{H}_w^c(U,F)$ if and only if $T_f\in \mathcal{K}(\mathcal{G}_w(U),F)$.
\end{proposition}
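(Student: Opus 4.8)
The plan is to derive both equivalences from the factorization $f = T_f \circ \Delta_w$ provided by the Linearization Theorem, combined with two structural facts about $\mathcal{G}_w(U)$: that the linear span of $\Delta_w(U)$ is dense in $\mathcal{G}_w(U)$, and that the closed unit ball $B_{\mathcal{G}_w(U)}$ coincides with $\overline{\Gamma}\{w(x)\Delta_w(x) : x \in U\}$, where $\overline{\Gamma}(\cdot)$ denotes the closed absolutely convex hull. Note that each $w(x)\Delta_w(x)$ lies in $B_{\mathcal{G}_w(U)}$, since $\|\Delta_w\|_w \le 1$. These are the weighted analogues of the corresponding descriptions of Mujica's predual of $\mathcal{H}^\infty(U)$ and are available from \cite{DB1}.

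For part $(a)$: if $T_f \in \mathcal{F}(\mathcal{G}_w(U), F)$, then $f(U) = T_f(\Delta_w(U))$ is contained in the finite dimensional subspace $T_f(\mathcal{G}_w(U))$, so $f \in \mathcal{H}_w(U) \otimes F$. Conversely, if $f \in \mathcal{H}_w(U) \otimes F$, set $M = \mathrm{span}\, f(U)$, which is a finite dimensional, hence closed, subspace of $F$; then $T_f$ maps the dense subspace $\mathrm{span}\, \Delta_w(U)$ into $M$, so by continuity $T_f(\mathcal{G}_w(U)) \subseteq M$ and $T_f$ has finite rank.

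For part $(b)$: if $T_f \in \mathcal{K}(\mathcal{G}_w(U), F)$, then for each $x \in U$ one has $w(x) f(x) = T_f(w(x) \Delta_w(x)) \in T_f(B_{\mathcal{G}_w(U)})$, which is relatively compact; hence $(wf)(U)$ is relatively compact and $f \in \mathcal{H}_w^c(U, F)$. Conversely, suppose $(wf)(U)$ is relatively compact. Then its closed absolutely convex hull $K$ is a compact subset of $F$: the balanced hull of $(wf)(U)$ is relatively compact, and by Mazur's theorem the closed convex hull of a compact set in a Banach space is compact. As $T_f(w(x)\Delta_w(x)) = w(x) f(x) \in K$ for every $x \in U$ and $K$ is closed and absolutely convex, linearity and continuity of $T_f$ give $T_f(B_{\mathcal{G}_w(U)}) = T_f\big(\overline{\Gamma}\{w(x)\Delta_w(x) : x \in U\}\big) \subseteq K$, so $T_f$ is compact.

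The step that carries the real content is the identification $B_{\mathcal{G}_w(U)} = \overline{\Gamma}\{w(x)\Delta_w(x) : x \in U\}$; everything else is formal. I would establish it by a Hahn--Banach separation argument: the inclusion $\supseteq$ is immediate because $B_{\mathcal{G}_w(U)}$ is closed, absolutely convex and contains every $w(x)\Delta_w(x)$; for $\subseteq$, if some $\phi \in B_{\mathcal{G}_w(U)}$ lay outside the closed absolutely convex set on the right, I would separate it by a functional which, under the identification $\mathcal{G}_w(U)^* = \mathcal{H}_w(U)$, is some $g$ with (after normalization) $|w(x) g(x)| \le 1$ for all $x \in U$, i.e. $\|g\|_w \le 1$, while $|\phi(g)| > 1$, contradicting $\|\phi\|_{\mathcal{G}_w(U)} \le 1$. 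The density of $\mathrm{span}\, \Delta_w(U)$ used in $(a)$ follows from this, or directly: a functional on $\mathcal{G}_w(U)$ vanishing on $\Delta_w(U)$ corresponds to some $g \in \mathcal{H}_w(U)$ with $g(x) = T_g(\Delta_w(x)) = 0$ for all $x \in U$, hence $g = 0$.
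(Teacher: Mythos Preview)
Your argument is correct. Note, however, that the paper does not actually prove this proposition: it is listed among the results quoted from \cite{DB1} in the preliminaries, so there is no proof in the present paper to compare against. That said, the approach you outline---exploiting the factorization $f=T_f\circ\Delta_w$, the density of $\mathrm{span}\,\Delta_w(U)$ in $\mathcal{G}_w(U)$, and the identification $B_{\mathcal{G}_w(U)}=\overline{\Gamma}\{w(x)\Delta_w(x):x\in U\}$ together with Mazur's theorem---is precisely the standard one, modelled on Mujica's treatment of the unweighted case in \cite{Mujica} and carried over to the weighted setting in \cite{DB1}. Your Hahn--Banach separation sketch for the ball identification is also the right idea and matches how such descriptions of preduals are typically established.
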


Further, we recall the locally convex topology $\tau_{\mathcal{M}}$ on
$\mathcal{H}_w(U,F)$  generated by the family $\{p_{\bar{\alpha},\bar{A}}: \bar{\alpha} = (\alpha_j) \in c^+_0
, ~\bar{A} = (A_j), A_j ~\textrm{being finite subset of U
	for each}~ j\}$ of semi-norms defined by
$$p_{\bar{\alpha},\bar{A}}( f ) = \sup_
{j\in \mathbb{N}}(\alpha_j\inf_{x\in A_j}w(x)\sup_{y\in A_j}\|f(y)\|),~~f\in \mathcal{H}_w(U,F).
$$
\begin{theorem}\label{topological isomorphism}
	Let $E$ and $F$ be Banach spaces and $\it{w}$ be a weight on an open subset of $E$. Then the mapping $\Psi:(\mathcal{H}_w(U,F), \tau_{\mathcal{M}})\rightarrow (\mathcal{L}(\mathcal{G}_w(U), F), \tau_c)$ is a topological isomorphism.
\end{theorem}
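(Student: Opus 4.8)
The plan is to bootstrap the topological statement off the Linearization Theorem. That theorem already supplies the linear isometric isomorphism $\Psi\colon\mathcal H_w(U,F)\to\mathcal L(\mathcal G_w(U),F)$, $\Psi(f)=T_f$, so the only thing left is to check that $\Psi$ carries $\tau_{\mathcal M}$ onto the compact–open topology $\tau_c$ and back. The key computation is that, since $f=T_f\circ\Delta_w$ and $w(y)\,\|\Delta_w(y)\|\le\|\Delta_w\|_w\le1$, each generating seminorm of $\tau_{\mathcal M}$ is in fact a seminorm of uniform convergence on a compact set. Writing $\xi_{j,y}:=\alpha_j\bigl(\inf_{x\in A_j}w(x)\bigr)\Delta_w(y)$ for $y\in A_j$, one gets
\[
p_{\bar\alpha,\bar A}(f)=\sup_{j}\ \alpha_j\Bigl(\inf_{x\in A_j}w(x)\Bigr)\sup_{y\in A_j}\bigl\|T_f(\Delta_w(y))\bigr\|=\sup\bigl\{\,\|T_f\xi\|:\xi\in L_{\bar\alpha,\bar A}\,\bigr\},
\]
where $L_{\bar\alpha,\bar A}=\{\xi_{j,y}:j\in\mathbb N,\ y\in A_j\}$. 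Each $\xi_{j,y}$ has norm $\le\alpha_j$, so, as $\alpha_j\to0$ and each $A_j$ is finite, $L_{\bar\alpha,\bar A}$ is totally bounded; hence its closed absolutely convex hull $K_{\bar\alpha,\bar A}:=\overline{\Gamma}(L_{\bar\alpha,\bar A})$ is compact in $\mathcal G_w(U)$ and $p_{\bar\alpha,\bar A}=q_{K_{\bar\alpha,\bar A}}\circ\Psi$, where $q_K(T)=\sup_{\xi\in K}\|T\xi\|$. This already shows that $\Psi^{-1}$ is continuous from $(\mathcal L(\mathcal G_w(U),F),\tau_c)$ to $(\mathcal H_w(U,F),\tau_{\mathcal M})$.

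The content of the theorem is therefore the opposite inclusion: every compact $K\subseteq\mathcal G_w(U)$ must be absorbed by one of the sets $K_{\bar\alpha,\bar A}$, i.e.\ there exist $\bar\alpha\in c_0^+$, $\bar A=(A_j)$ and $C>0$ with $K\subseteq C\,K_{\bar\alpha,\bar A}$. By the bipolar theorem (and, for general $F$, by writing $\|T_f\xi\|=\sup_{\psi\in B_{F^*}}|\psi T_f\xi|$ with $\psi T_f\in\mathcal G_w(U)^*=\mathcal H_w(U)$) this is exactly the estimate $\sup_{\xi\in K}\|T_f\xi\|\le C\,p_{\bar\alpha,\bar A}(f)$ for all $f$, which makes $\Psi$ continuous. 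To produce the data $\bar\alpha,\bar A,C$ I would use three things: (i) $\operatorname{span}\{\Delta_w(x):x\in U\}$ is dense in $\mathcal G_w(U)$, since any $g\in\mathcal G_w(U)^*=\mathcal H_w(U)$ vanishing on all $\Delta_w(x)$ satisfies $g(x)=T_g(\Delta_w(x))=0$ for every $x$, hence $g=0$; (ii) Grothendieck's lemma, by which $K\subseteq\overline{\Gamma}(\{z_n:n\in\mathbb N\})$ for some sequence $z_n\to0$ in the Banach space $\mathcal G_w(U)$; and (iii) a telescoping approximation of each $z_n$ by elements of $\operatorname{span}\{\Delta_w(x)\}$ with geometrically small errors, which lets one write each $z_n$ as a rapidly convergent series of scaled evaluators $\Delta_w(x)$. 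One then distributes the finitely many points $x$ appearing at each stage among the finite sets $A_j$ and chooses the weights $\alpha_j\to0$ so as to compensate both the $\ell^1$–cost of the expansions and the decay rate of the errors; this packs the whole null sequence, hence $K$, into a fixed multiple of some $K_{\bar\alpha,\bar A}$. Together with the first step this makes $\Psi$ a topological isomorphism.

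The hard part is step (iii) and the accompanying bookkeeping: starting from an arbitrary null sequence in $\mathcal G_w(U)$ one has to manufacture a single structured family $\{\alpha_j(\inf_{A_j}w)\Delta_w(y):y\in A_j,\ j\in\mathbb N\}$ — finite index sets $A_j$, scalars $\alpha_j\in c_0^+$ — whose closed absolutely convex hull, up to one uniform constant, contains the whole sequence. Keeping this family relatively compact while simultaneously bounding, uniformly in $n$, the cost of re-expressing each $z_n$ through single evaluators is the delicate point, and it is precisely here that the bound $w(y)\|\Delta_w(y)\|\le1$ and the latitude in choosing $\bar\alpha$ are used. Everything else reduces to the Linearization Theorem and the straightforward rewriting of the seminorms $p_{\bar\alpha,\bar A}$ displayed above.
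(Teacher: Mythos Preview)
The paper does not prove this theorem: it appears in the preliminaries section and is quoted from \cite{DB1} without proof, so there is no argument in the present paper to compare your proposal against.

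Assessed on its own, your first half is correct and complete. The rewriting
\[
p_{\bar\alpha,\bar A}(f)=\sup\bigl\{\|T_f\xi\|:\xi\in L_{\bar\alpha,\bar A}\bigr\},\qquad L_{\bar\alpha,\bar A}=\{\alpha_j(\inf_{A_j}w)\Delta_w(y):j\in\mathbb N,\ y\in A_j\},
\]
together with the bound $\|\xi_{j,y}\|\le\alpha_j$ (from $w(y)\|\Delta_w(y)\|\le\|\Delta_w\|_w\le1$ and $\inf_{A_j}w\le w(y)$), shows that $L_{\bar\alpha,\bar A}\cup\{0\}$ is compact and hence that $\Psi^{-1}$ is $\tau_c$--$\tau_{\mathcal M}$ continuous. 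This part is fine.

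For the converse direction you give a plan, not a proof, and you flag this yourself. Ingredients (i) and (ii) are standard. The intended step (iii)---expanding each term of a null sequence $(z_n)$ in $\mathcal G_w(U)$ as a rapidly convergent series of scaled evaluators with \emph{uniformly} bounded $\ell^1$-cost---is indeed feasible, because the bipolar theorem gives $B_{\mathcal G_w(U)}=\overline{\Gamma}\{w(x)\Delta_w(x):x\in U\}$ in norm, so one can telescope with geometric error and cost $\le2\|z_n\|$. What remains genuinely unwritten is the reorganisation of the doubly indexed family of points appearing in all these expansions into singly indexed finite blocks $A_j$ with a single $\bar\alpha\in c_0^+$, so that the whole null sequence sits inside one fixed multiple of $K_{\bar\alpha,\bar A}$; here one must also reconcile the individual scalars $w(x)$ with the block scalars $\inf_{A_j}w$. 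This can be arranged by enumerating the pairs $(n,m)$ so that $2^{-m}\|z_n\|\to0$ along the enumeration and putting each finite batch of points into its own $A_j$, but until that bookkeeping is actually carried out the argument is a program rather than a proof.
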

In case $\Psi$ is restricted to norm bounded subsets of $\mathcal{H}_w(U,F)$, we have the following result from \cite{DB3}.
\begin{theorem}\label{topo bounded iso}
	Let $E$ and $F$ be Banach spaces and $\it{w}$ be a weight on an open subset $U$ of $E$. Then the restriction of the map $\Psi:(\mathcal{H}_w(U,F), \tau_{c})\rightarrow (\mathcal{L}(\mathcal{G}_w(U), F), \tau_c)$ on $\|\cdot\|_w$-bounded subsets of $\mathcal{H}_w(U,F)$ is a topological isomorphism.
\end{theorem}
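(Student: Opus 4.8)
The plan is to reduce the statement to the topological isomorphism $\Psi\colon(\mathcal{H}_w(U,F),\tau_{\mathcal{M}})\to(\mathcal{L}(\mathcal{G}_w(U),F),\tau_c)$ already established in Theorem~\ref{topological isomorphism}. Fix a $\|\cdot\|_w$-bounded set $B\subseteq\mathcal{H}_w(U,F)$, say $\|f\|_w\le M$ for all $f\in B$. Since $\Psi$ is an isometric isomorphism (Theorem~\ref{Linearization Theorem}), $\Psi(B)$ is a $\|\cdot\|$-bounded subset of $\mathcal{L}(\mathcal{G}_w(U),F)$ and the restriction $\Psi|_B\colon(B,\tau_{\mathcal{M}}|_B)\to(\Psi(B),\tau_c|_{\Psi(B)})$ is a homeomorphism, being the restriction of a global homeomorphism. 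Hence it suffices to prove the purely topological fact that \emph{on $B$ the topology $\tau_{\mathcal{M}}$ coincides with the compact-open topology $\tau_c$}: once this is known, $\Psi|_B$ becomes a homeomorphism from $(B,\tau_c|_B)$ onto $(\Psi(B),\tau_c|_{\Psi(B)})$, which is exactly the assertion.

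For the inclusion $\tau_{\mathcal{M}}|_B\subseteq\tau_c|_B$, fix $f_0\in B$ and a basic seminorm $p_{\bar\alpha,\bar A}$ with $\bar\alpha=(\alpha_j)\in c^+_0$ and $\bar A=(A_j)$. For any $g\in B$ and each $j$ one has
\[
\alpha_j\,\inf_{x\in A_j}w(x)\,\sup_{y\in A_j}\|g(y)-f_0(y)\|\ \le\ \alpha_j\,\|g-f_0\|_w\ \le\ 2M\alpha_j ,
\]
so, given $\varepsilon>0$, all indices $j>N$ (for suitable $N$, since $\alpha_j\to0$) contribute less than $\varepsilon$ uniformly in $g\in B$. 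The finitely many finite sets $A_1,\dots,A_N$ together form a (finite, hence compact) set $K\subseteq U$, and on it the weight attains a positive minimum; smallness of $\sup_{y\in K}\|g(y)-f_0(y)\|$ then forces the remaining terms $j\le N$ to be $<\varepsilon$. Thus a suitable $\tau_c$-neighbourhood of $f_0$ is contained in $\{g:p_{\bar\alpha,\bar A}(g-f_0)<\varepsilon\}$.

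For the reverse inclusion $\tau_c|_B\subseteq\tau_{\mathcal{M}}|_B$ — which is the heart of the matter, and the step I expect to be the main obstacle — the key point is equicontinuity on compacta. A $\|\cdot\|_w$-bounded family is uniformly bounded on every $U$-bounded subset of $U$ (because a weight is bounded below on $U$-bounded sets), and since every compact $K\subseteq U$ has a $U$-bounded neighbourhood in $U$, the Cauchy inequalities (cf.~\cite{Mujica1}) show that $B$ is uniformly Lipschitz on $K$, say with constant $L$. Given such a $K$ and $\varepsilon>0$, choose a finite $\tfrac{\varepsilon}{6L}$-net $\{a_1,\dots,a_n\}$ in $K$; then for $g,f_0\in B$,
\[
\sup_{y\in K}\|g(y)-f_0(y)\|\ \le\ \max_{1\le i\le n}\|g(a_i)-f_0(a_i)\|+\tfrac{\varepsilon}{3}.
\]
Taking the seminorm $p_{\bar\alpha,\bar A}$ with first block $A_1=\{a_1,\dots,a_n\}$ and $\alpha_1>0$, and using $\min_i w(a_i)>0$, smallness of $p_{\bar\alpha,\bar A}(g-f_0)$ forces $\max_i\|g(a_i)-f_0(a_i)\|<\tfrac{2\varepsilon}{3}$, hence $\sup_{y\in K}\|g(y)-f_0(y)\|<\varepsilon$. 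This yields $\tau_c|_B\subseteq\tau_{\mathcal{M}}|_B$ and completes the argument. The only non-formal ingredient is the equicontinuity on compact subsets of $U$ of $\|\cdot\|_w$-bounded subsets of $\mathcal{H}_w(U,F)$; everything else is bookkeeping with the defining seminorms of $\tau_{\mathcal{M}}$ together with the fact that sequences in $c^+_0$ vanish at infinity.
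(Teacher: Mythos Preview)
The paper does not actually prove this theorem: it is quoted verbatim as a result from \cite{DB3} (see the sentence immediately preceding Theorem~\ref{topo bounded iso}: ``In case $\Psi$ is restricted to norm bounded subsets of $\mathcal{H}_w(U,F)$, we have the following result from \cite{DB3}''). Hence there is no proof in the present paper against which your proposal can be compared.

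For what it is worth, your strategy is the natural one and is essentially correct. Reducing to Theorem~\ref{topological isomorphism} and then showing that $\tau_{\mathcal{M}}$ and $\tau_c$ agree on every $\|\cdot\|_w$-bounded set $B$ is exactly the right reformulation. The inclusion $\tau_{\mathcal{M}}|_B\subseteq\tau_c|_B$ is handled correctly: the estimate $\alpha_j\,\inf_{A_j}w\,\sup_{A_j}\|g-f_0\|\le\alpha_j\|g-f_0\|_w\le 2M\alpha_j$ kills the tail $j>N$ uniformly over $B$, and the finitely many remaining finite sets $A_1,\dots,A_N$ form a compact (finite) set on which $\tau_c$ gives direct control. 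For the reverse inclusion you correctly isolate the one substantive ingredient, namely equicontinuity of $B$ on compact subsets of $U$: since any compact $K\subset U$ admits a $U$-bounded neighbourhood on which the weight is bounded below, $B$ is uniformly bounded there and the Cauchy inequalities yield a uniform local Lipschitz bound, so a finite net in $K$ suffices; this finite net is then captured by a single $\tau_{\mathcal{M}}$-seminorm. One small point to tidy up: the Lipschitz estimate is local (along short segments lying in the $U$-bounded neighbourhood), so the mesh of your net should be chosen smaller than the radius on which the Cauchy estimate is valid, not merely $\varepsilon/(6L)$; this is cosmetic and does not affect the argument.
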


A Banach space $E$ is said to have the \emph{approximation property} (the comapct \emph{approximation property}) abbreviated as AP(CAP)
if for every compact set $K$ of $E$ and $\epsilon>0$, there exists
an operator $T\in {\mathcal{F}}(E,E)$ ($T\in {\mathcal{K}}(E,E)$) such that
$$\sup_{x\in K}\|T(x)-x\|<\epsilon.$$ If $T\in \mathcal{F}(E,E)$ can be chosen with $\|T\|\leq \lambda$ for some $\lambda$, $1\leq \lambda<\infty$, then $E$ is said to have the \emph{$\lambda$-bounded approximation property}($\lambda$-BAP). $E$ has the \emph{bounded approximation property}(BAP) if $E$ has  the $\lambda$-BAP for some $\lambda$, $1\leq \lambda<\infty$.

The following characterization of the bounded approximation property is due to Grothendieck and proved in \cite{Casazza}.
\begin{theorem}\label{bap char}
	For a Banach space $E$ and $1\leq \lambda < \infty$, the following are equivalent:\\
	(i) $E$ has the $\lambda$-bounded approximation property.\\ (ii)
	$\overline{B^{\lambda}_{\mathcal{F}(E,F)}}^{\tau_c}= B_{\mathcal{L}(E,F)}$ for every Banach space $F$.\\
	(ii)
	$\overline{B^{\lambda}_{\mathcal{F}(F,E)}}^{\tau_c}= B_{\mathcal{L}(F,E)}$ for every Banach space $F$.\\
	(iv)  $\overline{B^{\lambda}_{\mathcal{F}(E,E)}}^{\tau_c}= B_{\mathcal{L}(E,E)}$.
	
\end{theorem}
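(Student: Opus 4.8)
The statement is the classical Grothendieck characterisation of the $\lambda$-bounded approximation property, so my plan is to reproduce its elementary proof, the organising idea being that all four conditions can be tested against, or reduced to, the identity operator of $E$. I would begin with two immediate reductions. Specialising (ii) — or (iii) — to the case $F = E$ gives (iv) for free. Conversely, if (iv) holds then $\mathrm{id}_E$, having norm one, lies in $B_{\mathcal{L}(E,E)} = \overline{B^{\lambda}_{\mathcal{F}(E,E)}}^{\tau_c}$; unwinding the meaning of $\tau_c$ (uniform convergence on compact sets), this says exactly that for every compact $K \subseteq E$ and every $\epsilon > 0$ there is $S \in \mathcal{F}(E,E)$ with $\|S\| \le \lambda$ and $\sup_{x \in K}\|Sx - x\| < \epsilon$, i.e. $E$ has the $\lambda$-BAP. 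So (i) $\Leftrightarrow$ (iv), and (ii) and (iii) each already imply (i); what remains is to derive (ii) and (iii) from (i).

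For (i) $\Rightarrow$ (ii) I would fix a Banach space $F$, an operator $T$ with $\|T\| \le 1$, a compact set $K \subseteq E$ and $\epsilon > 0$, and use the $\lambda$-BAP to pick $S \in \mathcal{F}(E,E)$ with $\|S\| \le \lambda$ and $\sup_{x \in K}\|Sx - x\| < \epsilon$. Then $T \circ S \in \mathcal{F}(E,F)$, $\|T \circ S\| \le \|T\|\,\|S\| \le \lambda$, and $\sup_{x \in K}\|TSx - Tx\| \le \|T\|\,\epsilon \le \epsilon$, so $T$ belongs to $\overline{B^{\lambda}_{\mathcal{F}(E,F)}}^{\tau_c}$; this establishes the essential inclusion $B_{\mathcal{L}(E,F)} \subseteq \overline{B^{\lambda}_{\mathcal{F}(E,F)}}^{\tau_c}$ of (ii), the reverse inclusion being immediate from the behaviour of the operator norm under $\tau_c$-convergence. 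For (i) $\Rightarrow$ (iii) the argument is the mirror image: given $T$ with $\|T\| \le 1$ acting $F \to E$, a compact $K \subseteq F$ and $\epsilon > 0$, I would note that $T(K)$ is compact in $E$, apply the $\lambda$-BAP to $T(K)$ to obtain $S \in \mathcal{F}(E,E)$ with $\|S\| \le \lambda$ and $\sup_{z \in T(K)}\|Sz - z\| < \epsilon$, and check that $S \circ T \in \mathcal{F}(F,E)$ has norm $\le \lambda$ and satisfies $\sup_{y \in K}\|STy - Ty\| < \epsilon$.

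The only place where a little care is required is the bookkeeping of the norm bounds in these two compositions: for (ii) one composes the finite-rank operator $S$ on the right of $T$, for (iii) on the left, so that in each case the finite-rank factor simultaneously controls the rank of the product and, through $\|S\| \le \lambda$, its norm, while the norm-one factor $T$ contributes only the harmless factor $\|T\| \le 1$. Beyond this the proof is entirely routine, and — worth noting — purely Banach-space-theoretic: none of the linearisation machinery for weighted spaces of holomorphic functions enters here, the theorem being used later only as an external tool.
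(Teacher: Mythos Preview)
The paper does not prove this theorem at all: it is quoted as a classical result due to Grothendieck, with a reference to Casazza's survey \cite{Casazza}, and is invoked later only as an external tool. Your argument is precisely the standard elementary proof one finds in such sources, and the chain $(i)\Rightarrow(ii)$, $(i)\Rightarrow(iii)$, $(ii)\,\text{or}\,(iii)\Rightarrow(iv)\Rightarrow(i)$ is correctly carried out.

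One small correction, though: the ``reverse inclusion'' you call immediate actually fails as literally stated whenever $\lambda>1$. Lower semicontinuity of the operator norm under $\tau_c$-limits yields only $\overline{B^{\lambda}_{\mathcal{F}(E,F)}}^{\,\tau_c}\subseteq B^{\lambda}_{\mathcal{L}(E,F)}$, not containment in the \emph{unit} ball; any finite-rank operator of norm exactly $\lambda$ already witnesses the failure. This is really a slight imprecision in the theorem's formulation as recorded in the paper rather than a defect in your reasoning --- the substantive content of each of $(ii)$--$(iv)$ is the inclusion $B_{\mathcal{L}}\subseteq\overline{B^{\lambda}_{\mathcal{F}}}^{\,\tau_c}$, which you establish correctly, and that is the only direction the paper ever uses (in the proof of Theorem~3.7). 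Just don't claim the opposite inclusion holds.
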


Similar to the  above characterization of the bounded approximation property, the following characterizations of the compact approximation property is quoted from \cite{Caliskan}, see also \cite{Casazza}.
\begin{theorem}\label{cap char}
	For a Banach space $E$, the following are equivalent:\\
	(i) $E$ has the compact approximation property.\\ (ii)
	$\overline{\mathcal{K}(E,F)}^{\tau_c}= \mathcal{L}(E,F)$ for every Banach space $F$.\\
	(iii) $\overline{\mathcal{K}(F,E)}^{\tau_c}= \mathcal{L}(F,E)$ for every Banach space $F$.
\end{theorem}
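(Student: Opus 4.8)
The plan is to prove the two equivalences $(i)\Leftrightarrow(ii)$ and $(i)\Leftrightarrow(iii)$ separately, the central ingredient being that $\mathcal{K}$ is a two-sided operator ideal: $RT$ and $TR$ are compact whenever one of $R,T$ is compact and the other is bounded. Since the closures are computed inside $\mathcal{L}(E,F)$ (resp.\ $\mathcal{L}(F,E)$), only the inclusions $\mathcal{L}(E,F)\subseteq\overline{\mathcal{K}(E,F)}^{\tau_c}$ and $\mathcal{L}(F,E)\subseteq\overline{\mathcal{K}(F,E)}^{\tau_c}$ require an argument, and by the definition of $\tau_c$ (uniform convergence on compact sets) these amount to: for every compact set and every $\epsilon>0$, the given operator can be $\epsilon$-approximated, uniformly on that set, by a compact operator.

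For $(i)\Rightarrow(ii)$ I would fix $T\in\mathcal{L}(E,F)$, a compact set $K\subseteq E$ and $\epsilon>0$, use the compact approximation property to pick $R\in\mathcal{K}(E,E)$ with $\sup_{x\in K}\|Rx-x\|<\epsilon/(1+\|T\|)$, and then set $S:=TR$; this $S$ lies in $\mathcal{K}(E,F)$ and satisfies $\sup_{x\in K}\|Sx-Tx\|\le\|T\|\sup_{x\in K}\|Rx-x\|<\epsilon$. For $(i)\Rightarrow(iii)$ the argument is dual: given $T\in\mathcal{L}(F,E)$, a compact $K\subseteq F$ and $\epsilon>0$, the set $T(K)$ is compact in $E$, so the compact approximation property yields $R\in\mathcal{K}(E,E)$ with $\sup_{y\in T(K)}\|Ry-y\|<\epsilon$; then $S:=RT\in\mathcal{K}(F,E)$ and $\sup_{x\in K}\|Sx-Tx\|=\sup_{y\in T(K)}\|Ry-y\|<\epsilon$.

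The converse implications are obtained by specialization: applying $(ii)$ (respectively $(iii)$) with $F=E$ and $T=\mathrm{id}_E$ shows that $\mathrm{id}_E$ belongs to $\overline{\mathcal{K}(E,E)}^{\tau_c}$, which is exactly the assertion that $E$ enjoys the compact approximation property. This closes the two equivalences.

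I do not anticipate a genuine obstacle here; the proof is the compact-operator transcription of Grothendieck's classical characterization, and the only points needing (routine) verification are the ideal property of $\mathcal{K}$ under composition and the unwinding of the $\tau_c$-closure into the $\epsilon$-net form used above. This is why the statement is merely recorded here, with references to \cite{Caliskan} and \cite{Casazza}.
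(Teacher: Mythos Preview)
Your argument is correct and is precisely the standard proof of this classical characterization; the paper itself does not supply a proof but merely quotes the result from \cite{Caliskan} and \cite{Casazza}. There is nothing to compare against, and your write-up could serve as the omitted justification.
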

\begin{proposition}\label{cap poly char}
	For a Banach space $E$, the following are equivalent:\\
	(i) $E$ has the compact approximation property.\\ (ii)
	$\mathcal{P}(E,F)= \overline{\mathcal{P}_k(E,F)}^{\tau_c}$ for every Banach space $F$.
\end{proposition}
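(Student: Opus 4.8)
The plan is to prove the two nontrivial implications separately. For $(i)\Rightarrow(ii)$ I would precompose a given polynomial with a compact operator that almost fixes a prescribed compact set, and for $(ii)\Rightarrow(i)$ I would apply $(ii)$ to the identity and then pass to the linear Taylor coefficient of the approximating compact polynomials. The reverse inclusion $\overline{\mathcal{P}_k(E,F)}^{\tau_c}\subseteq\mathcal{P}(E,F)$ is understood with the $\tau_c$-closure taken inside $\mathcal{P}(E,F)$, so it requires no argument.

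To carry out $(i)\Rightarrow(ii)$, fix a Banach space $F$, a polynomial $P\in\mathcal{P}(E,F)$, a compact set $K\subseteq E$ and $\epsilon>0$. First I would recall that a continuous polynomial is Lipschitz on bounded sets: writing $P=\sum_{m=0}^{l}P_m$ and using polarization together with the telescoping identity for the associated symmetric multilinear forms, $P$ has a finite Lipschitz constant $L$ on the ball $B^{R}_{E}$ with $R=1+\sup_{x\in K}\|x\|$. By the definition of the compact approximation property there is $S\in\mathcal{K}(E,E)$ with $\sup_{x\in K}\|S(x)-x\|<\min\{1,\epsilon/(L+1)\}$; then $S(K)\subseteq B^{R}_{E}$, whence $\sup_{x\in K}\|P(S(x))-P(x)\|\le L\sup_{x\in K}\|S(x)-x\|<\epsilon$. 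Finally $P\circ S\in\mathcal{P}(E,F)$ and $P\circ S(B_E)=P(S(B_E))\subseteq P(\overline{S(B_E)})$ is relatively compact, since $\overline{S(B_E)}$ is compact and $P$ is continuous; thus $P\circ S\in\mathcal{P}_k(E,F)$. As $K$ and $\epsilon$ were arbitrary, $P\in\overline{\mathcal{P}_k(E,F)}^{\tau_c}$.

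For $(ii)\Rightarrow(i)$, apply $(ii)$ with $F=E$ to the identity, viewed as the $1$-homogeneous polynomial $\mathrm{id}_E\in\mathcal{P}(^1E,E)\subseteq\mathcal{P}(E,E)$. Given a compact $K\subseteq E$ — which we may assume balanced, replacing it by $\{\lambda x:|\lambda|=1,\ x\in K\}$, still compact — and $\epsilon>0$, choose $P\in\mathcal{P}_k(E,E)$ with $\sup_{x\in K}\|P(x)-x\|<\epsilon$. Let $P_1=\hat{d}^{1}P(0)$ be the first Taylor coefficient of $P$ at $0$, so that $P_1(x)=\frac{1}{2\pi}\int_{0}^{2\pi}e^{-i\theta}P(e^{i\theta}x)\,d\theta$ by the Cauchy integral formula. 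Two points remain: (a) $P_1\in\mathcal{K}(E,E)$, because $P_1(B_E)$ lies in the closed convex hull of the compact set $\{\lambda y:|\lambda|=1,\ y\in\overline{P(B_E)}\}$, and closed convex hulls of compact sets in a Banach space are compact; and (b) since $\mathrm{id}_E$ is linear, $x=\frac{1}{2\pi}\int_{0}^{2\pi}e^{-i\theta}(e^{i\theta}x)\,d\theta$, so for $x\in K$ we get $\|P_1(x)-x\|\le\sup_{\theta}\|P(e^{i\theta}x)-e^{i\theta}x\|\le\sup_{y\in K}\|P(y)-y\|<\epsilon$, using that $K$ is balanced. Hence $\mathrm{id}_E$ is approximated uniformly on $K$ by the compact operator $P_1$, so $E$ has the compact approximation property.

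The hard part is the direction $(ii)\Rightarrow(i)$: one must replace compact polynomial approximants of the identity by genuine compact linear operators without losing compactness or the quality of the approximation, and this is exactly where the Cauchy integral representation of the Taylor coefficients and the compactness of closed convex hulls of compact sets enter. By contrast, $(i)\Rightarrow(ii)$ is a routine uniform-continuity bookkeeping argument once one knows that continuous polynomials are Lipschitz on bounded sets.
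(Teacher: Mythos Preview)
The paper does not supply its own proof of this proposition: it appears in the Preliminaries and is explicitly ``quoted from \cite{Caliskan}'' (see the sentence preceding the statement). So there is no in-paper argument to compare against; your proof has to stand on its own, and it does.

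Both directions are correct and follow the standard route one finds in \cite{Caliskan} and \cite{CaRue}. Two small remarks. First, in $(ii)\Rightarrow(i)$ the set $\{\lambda x:|\lambda|=1,\ x\in K\}$ is circled (stable under multiplication by unimodular scalars), which is exactly what your Cauchy-integral estimate uses, but it is not balanced in the usual sense ($|\lambda|\le 1$); either change the word or enlarge the set to $\{\lambda x:|\lambda|\le 1,\ x\in K\}$, which is still compact. Second, with the paper's conventions $\mathcal{P}_k(E,F)$ is most naturally read as $\{P=\sum_m P_m:\ P_m\in\mathcal{P}_k(^mE,F)\}$, so once $P\in\mathcal{P}_k(E,E)$ one has $P_1\in\mathcal{K}(E,E)$ immediately and the closed-convex-hull argument, while correct, is not needed. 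Your comment about where the $\tau_c$-closure is taken is apt: $\mathcal{P}(E,F)$ is not $\tau_c$-closed in $\mathcal{H}(E,F)$, so the equality in $(ii)$ should indeed be read as density of $\mathcal{P}_k(E,F)$ in $\mathcal{P}(E,F)$.
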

The next characterization of the compact approximation property for a Banach space $E$ in terms of the predual of the weighted spaces has been obtained in \cite{DB2}.
\begin{theorem}\label{cap holo char}
	Let $\it{w}$ be a be a radial weight on a balanced open subset $U$ of a Banach
	space $E$ such that $\mathcal{P}(^mE)\subset \mathcal{H}_w(E)$ for each $m\in \mathbb{N}$. Then, the following are equivalent:
	
	(i) $E$ has the compact approximation property.
	
	(ii) $\overline{\mathcal{P}_k(^mE, F)}^{\tau_\mathcal{M}}=\mathcal{H}_w(U,F)$ for each Banach space $F$ and for each $m\in \mathbb{N}$.
	
	(iii) $\overline{\mathcal{H}_w^c(U, F)}^{\tau_\mathcal{M}}=\mathcal{H}_w(U,F)$ for each Banach space $F$.
	
	(iv) $\mathcal{G}_w(U)$ has the compact approximation property.

\end{theorem}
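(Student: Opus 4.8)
The plan is to take statement (iv) as the pivot and prove (iv)$\Leftrightarrow$(iii), (iv)$\Leftrightarrow$(ii) and (iv)$\Leftrightarrow$(i), transferring every statement about weighted holomorphic maps into one about operators on $\mathcal{G}_w(U)$ through the Linearization Theorem~\ref{Linearization Theorem}.

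For (iii)$\Leftrightarrow$(iv): by Theorem~\ref{topological isomorphism}, $\Psi\colon(\mathcal{H}_w(U,F),\tau_{\mathcal M})\to(\mathcal{L}(\mathcal{G}_w(U),F),\tau_c)$ is a topological isomorphism, and by Proposition~\ref{Interplay of holo and linear}(b) it carries $\mathcal{H}_w^c(U,F)$ onto $\mathcal{K}(\mathcal{G}_w(U),F)$; hence (iii) is equivalent to $\overline{\mathcal{K}(\mathcal{G}_w(U),F)}^{\tau_c}=\mathcal{L}(\mathcal{G}_w(U),F)$ for every $F$, which by Theorem~\ref{cap char}(ii) applied with $\mathcal{G}_w(U)$ in place of $E$ is precisely (iv). For (ii)$\Leftrightarrow$(iv) I would first check the inclusion $\mathcal{P}_k(^mE,F)\subseteq\mathcal{H}_w^c(U,F)$: for compact $m$-homogeneous $P$ and $x\neq 0$ one writes $w(x)P(x)=\bigl(w(x)\|x\|^m\bigr)\,P(x/\|x\|)$, and the closed graph theorem applied to the inclusion $\mathcal{P}(^mE)\hookrightarrow\mathcal{H}_w(U)$ (legitimate by the hypothesis on $w$, both spaces being Banach with each norm dominating pointwise convergence on $U$) yields a constant $C$ with $\|\phi^m\|_w\le C$ for all $\phi\in B_{E^{*}}$, so that $\sup_{x\in U}w(x)\|x\|^m\le C$; hence $\{w(x)P(x):x\in U\}\subseteq[0,C]\cdot\overline{P(B_E)}$, a compact set. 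By Proposition~\ref{Interplay of holo and linear}(b) the map $\Psi$ then sends $\mathcal{P}_k(^mE,F)$ into $\mathcal{K}(\mathcal{G}_w(U),F)$, and combining Theorem~\ref{topo bounded iso} with Propositions~\ref{poly containment in weighted holo} and~\ref{cap poly char} (the latter in its $m$-homogeneous form) one finds that the $\tau_{\mathcal M}$-closure of $\mathcal{P}_k(^mE,F)$ recovers all $m$-homogeneous members of $\mathcal{H}_w(U,F)$, for every $F$ and every $m$, exactly when $\mathcal{G}_w(U)$ --- equivalently $E$ --- has the compact approximation property.

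It remains to link (i) with (iv). For (iv)$\Rightarrow$(i): balancedness gives $0\in U$, and $E^{*}=\mathcal{P}(^1E)\subseteq\mathcal{H}_w(U)$ forces $\sup_{x\in U}w(x)\|x\|<\infty$, so the inclusion $\iota\colon U\hookrightarrow E$ belongs to $\mathcal{H}_w(U,E)$ and linearises to $T_\iota\in\mathcal{L}(\mathcal{G}_w(U),E)$ with $T_\iota\circ\Delta_w=\iota$; differentiating at the origin and using the Cauchy estimates, $j:=\hat{d}^{1}\Delta_w(0)\in\mathcal{L}(E,\mathcal{G}_w(U))$ satisfies $T_\iota\circ j=\mathrm{id}_E$, so $j\circ T_\iota$ is a bounded projection of $\mathcal{G}_w(U)$ onto a complemented subspace isomorphic to $E$; since the compact approximation property passes to complemented subspaces, $E$ has it. The converse (i)$\Rightarrow$(iv) is the harder half: one wishes to approximate $\mathrm{id}_{\mathcal{G}_w(U)}=T_{\Delta_w}$, uniformly on compact sets, by compact operators, and starting from a net $(S_\alpha)$ in $\mathcal{K}(E,E)$ with $S_\alpha\to\mathrm{id}_E$ in $\tau_c$ (Theorem~\ref{cap char}), the natural candidates are the linearisations of the maps $x\mapsto\Delta_w(\lambda_\alpha S_\alpha x)$. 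The obstacle is that a compact operator on $E$ need not carry $U$ into $U$, so one must multiply by small scalars $\lambda_\alpha$ and exploit that the balanced set $U$ contains a ball about $0$, together with the radiality of $w$, in order to stay inside $\mathcal{H}_w(U,\mathcal{G}_w(U))$, and then verify $\tau_c$-convergence of the associated operators via Theorems~\ref{topological isomorphism} and~\ref{topo bounded iso}. This transport step --- reconciling the finite-rank or compact approximants on $E$ with the domain $U$ and the weight $w$ --- is the one I expect to be the main obstacle.
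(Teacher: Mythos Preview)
The paper does not actually prove this theorem here; it is quoted in the Preliminaries from the authors' earlier work \cite{DB2}, so there is no in-paper proof to compare against line by line. However, the paper's Theorem~4.7 is the exact $\mathcal{I}$-analogue, and its proof reveals the intended architecture: a cycle $(i)\Leftrightarrow(b)\Rightarrow(iii)\Rightarrow(iv)\Rightarrow(i)$ rather than a pivot on (iv). Concretely, (i) gives $\mathcal{P}(^mE,F)=\overline{\mathcal{P}_k(^mE,F)}^{\tau_c}$ (Proposition~\ref{cap poly char}); then one uses the $\tau_{\mathcal M}$-density of $\mathcal{P}(E,F)$ in $\mathcal{H}_w(U,F)$ (cited from \cite{DB1}) together with $\mathcal{P}_k\subset\mathcal{H}_w^c$ to obtain (iii); then (iii)$\Rightarrow$(iv) by linearization exactly as you do; and (iv)$\Rightarrow$(i) because $E$ is complemented in $\mathcal{G}_w(U)$.

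Your (iii)$\Leftrightarrow$(iv) and (iv)$\Rightarrow$(i) are fine and match the paper's template. The genuine gap is your direct attack on (i)$\Rightarrow$(iv): the candidates $x\mapsto\Delta_w(\lambda_\alpha S_\alpha x)$ cannot work, because shrinking by $\lambda_\alpha$ to force $S_\alpha(U)\subset U$ destroys convergence of the linearizations to $\mathrm{id}_{\mathcal{G}_w(U)}$ (already at the level of the first Taylor coefficient one gets $\lambda_\alpha S_\alpha$, not $S_\alpha$), and there is no reason a net witnessing CAP of $E$ can be chosen to map $U$ into $U$. The paper's scheme sidesteps this entirely by passing through (iii), where no ``transport into $U$'' is needed: one approximates an arbitrary $f\in\mathcal{H}_w(U,F)$ first by a polynomial and then each homogeneous piece by a compact polynomial, all inside $\mathcal{H}_w(U,F)$. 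Your treatment of (ii) is also incomplete: you only argue that the $\tau_{\mathcal M}$-closure of $\mathcal{P}_k(^mE,F)$ contains $\mathcal{P}(^mE,F)$, not all of $\mathcal{H}_w(U,F)$ as (ii) asserts; the full claim again needs the $\tau_{\mathcal M}$-density of polynomials from \cite{DB1}.
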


\section{$\mathcal{H}_w(U,F)$ and the compact approximation property}
In this section, we study the compact approximation property for the space $\mathcal{G}_w(U)$. 
Let us begin with the following lemma.
\begin{lemma}\label{Lemma}
	For each $x\in U$, there exists $\epsilon>0$ and a $U$-bounded set  $V_{x,\epsilon}$ such that $x\in V_{x,\epsilon}\subset U$.
\end{lemma}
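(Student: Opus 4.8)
The plan is simply to unwind the definition of a $U$-bounded set and use that $U$ is open. Recall that a subset $A\subseteq U$ is $U$-bounded when it is norm-bounded and there is a neighbourhood $W$ of the origin with $A+W\subseteq U$. So for a fixed $x\in U$ I would first invoke openness of $U$ to pick $r>0$ with $B(x,r)\subseteq U$, where $B(x,r)$ denotes the open ball of radius $r$ centred at $x$. Then I set $\epsilon=r/2$ and take $V_{x,\epsilon}=\{y\in E:\|y-x\|\le\epsilon\}$, the closed ball of radius $\epsilon$ about $x$. By construction $x\in V_{x,\epsilon}\subseteq U$ and $V_{x,\epsilon}$ is bounded, so the only thing left is to exhibit a suitable neighbourhood of $0$.

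For that, take $W=\{z\in E:\|z\|<\epsilon\}$, the open ball of radius $\epsilon$ about the origin. Given $y\in V_{x,\epsilon}$ and $z\in W$, the triangle inequality yields $\|(y+z)-x\|\le\|y-x\|+\|z\|<\epsilon+\epsilon=r$, so $y+z\in B(x,r)\subseteq U$; hence $V_{x,\epsilon}+W\subseteq U$, and $V_{x,\epsilon}$ is $U$-bounded. There is essentially no obstacle here: the statement is a routine consequence of the openness of $U$, and the only point to watch is keeping one of the two balls open so that the estimate is strict and the sum lands in $B(x,r)$ itself rather than merely in its closure.
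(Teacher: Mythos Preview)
Your proof is correct and follows essentially the same approach as the paper: pick a ball about $x$ contained in $U$, take $V_{x,\epsilon}$ to be the closed ball of half the radius, and use the remaining half-radius ball as the required neighbourhood of the origin. The paper's version is more terse (it writes $x+2\epsilon B_E\subset U$ and sets $V_{x,\epsilon}=x+\epsilon B_E$ without spelling out the triangle-inequality verification), but the argument is the same.
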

\begin{proof}
	Since $U$ is open, there exists $\epsilon>0$ such that $x+2\epsilon B_E\subset U$. Define $V_{x,\epsilon}=x+\epsilon B_E$. Then $V_{x,\epsilon}$ is $U$-bounded and $x\in V_{x,\epsilon}\subset U$.
\end{proof}
Let us recall from \cite{Aron} that a map $f\in \mathcal{H}(U, F)$ is said to be \emph{compact} if for each $x\in U$, there exists a neighborhood $V_x$ of $x$ such that $V_x\subset U $ and $f(V_x)$ is relatively compact in $F$. We denote by $\mathcal{H}_k(U,F)$ the space of all compact holomorphic mappings from $U$ to $F$. A characterization of compact holomorphic mappings in terms of Taylor series coefficients was otained in \cite{Aron} by R. Aron and M. Schottenloher. This was further generalized by E. Caliskan and P. Rueda \cite{CaRue} for $\xi$-balanced domains (an open set $U$ is said to be $\xi$-balanced if $(1-\mu)\xi+\mu x\in U$ for all $x\in U$ and $\mu\in \mathbb{C}$ with $|\mu|\leq 1$) in locally convex spaces of which a particular case is quoted below.
\begin{proposition}\label{Aron result}
	Let $U$ be a balanced open subset of a Banach space $E$ and $f\in \mathcal{H}(U,F)$. Then the folowing are equivalent:
	
	(a) $f\in \mathcal{H}_k(U,F)$.
	
	(b)  $P_m f (0) \in \mathcal{P}_k(^mE, F)$ for all $m\in\mathbb{N}$.
	
	(c)  $P_m f (x) \in \mathcal{P}_k(^mE, F)$ for all $m\in\mathbb{N}$ and $x\in U$.
	
\end{proposition}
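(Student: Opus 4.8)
\noindent\textit{Proof proposal.} The plan is to prove the cycle of implications $(a)\Rightarrow(c)\Rightarrow(b)\Rightarrow(a)$. Here $(c)\Rightarrow(b)$ is immediate: since $U$ is balanced we have $0\in U$, so $(b)$ is just the case $x=0$ of $(c)$. The other two implications need work, and $(b)\Rightarrow(a)$ is where I expect the real difficulty to lie.

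For $(a)\Rightarrow(c)$ I would argue pointwise. Fix $x\in U$ and $m\in\mathbb{N}$. Compactness of $f$ gives a neighbourhood of $x$ with relatively compact image, so after shrinking there is $\rho>0$ with $B(x,\rho)\subset U$ and $f(B(x,\rho))$ relatively compact in $F$. For $y\in B_E$ and $s\in(0,\rho)$ the function $\lambda\mapsto f(x+\lambda y)$ is holomorphic on $\{|\lambda|<\rho\}$, so the Cauchy integral formula for Taylor coefficients gives $P_mf(x)(y)=\frac{1}{2\pi i}\int_{|\lambda|=s}\lambda^{-m-1}f(x+\lambda y)\,d\lambda$. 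Reading the right-hand side as an average of the values $s^{-m}e^{-im\theta}f(x+se^{i\theta}y)$, each of which lies in $s^{-m}\,\mathbb{T}\cdot f(B(x,\rho))$, I get that $P_mf(x)(B_E)$ is contained in $s^{-m}$ times the closed convex hull of the relatively compact set $\mathbb{T}\cdot f(B(x,\rho))$; that hull is compact by Mazur's theorem, so $P_mf(x)\in\mathcal{P}_k(^mE,F)$.

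For $(b)\Rightarrow(a)$ I would first use balancedness to set up a global Taylor expansion at $0$: for each $z\in U$ one has $\mu z\in U$ whenever $|\mu|\le 1$, and since $U$ is open $R(z):=\sup\{\rho>0:\rho z\in U\}>1$, so $\lambda\mapsto f(\lambda z)$ is holomorphic on a disc of radius $>1$ and evaluating its power series at $\lambda=1$ yields $f(z)=\sum_{m\ge 0}P_mf(0)(z)$ on all of $U$. Now fix $x\in U$, choose $r$ with $1<r<R(x)$, and let $K:=\{\lambda x:|\lambda|\le r\}$, a compact subset of $U$. The key preliminary point is that $f$ is bounded, say by $M$, on a set of the form $K+\varepsilon B_E\subset U$: this follows from the definition of holomorphy (each point has a ball on which $f$ is bounded) plus a finite subcover of $K$. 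Taking $\delta:=\varepsilon/(2r)$ and $V_x:=B(x,\delta)$, one checks with balancedness that $\lambda z\in U$ for all $|\lambda|\le r$ and $z\in V_x$, and that $\{\lambda z:|\lambda|=r,\ z\in V_x\}\subset K+\varepsilon B_E$; the Cauchy estimates then give $\sup_{z\in V_x}\|P_mf(0)(z)\|\le Mr^{-m}$, so $\sum_m P_mf(0)$ converges to $f$ uniformly on $V_x$. Since by $(b)$ each $P_mf(0)$ is a compact polynomial, it sends the bounded set $V_x$ to a relatively compact set; hence each partial sum $S_N:=\sum_{m\le N}P_mf(0)$ maps $V_x$ into a finite Minkowski sum of relatively compact sets, which is relatively compact, and as $S_N\to f$ uniformly on $V_x$ the range $f(V_x)$ is totally bounded, hence relatively compact (as $F$ is complete). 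Thus $f$ is compact at $x$, and $x$ being arbitrary, $f\in\mathcal{H}_k(U,F)$.

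I expect the main obstacle to be that ``boundedness on a neighbourhood of $K$'' step. A holomorphic function on $U$ need not be bounded on arbitrary bounded subsets of $U$, so one cannot simply bound $f$ on the ``tube'' $\{\lambda z:|\lambda|=r,\ z\in V_x\}$ directly; the remedy is to first extract, from local boundedness and compactness of $K$, a neighbourhood $K+\varepsilon B_E$ on which $f$ is bounded, and only then to pick $\delta$ small enough for the tube to fit inside it. Everything else is routine: the Cauchy-integral manipulations, the fact that compact polynomials carry bounded sets to relatively compact sets, and the stability of relative compactness under finite Minkowski sums and uniform limits.
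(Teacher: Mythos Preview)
Your argument is correct. Note, however, that the paper does not actually prove this proposition: it is explicitly \emph{quoted} from the literature (Aron--Schottenloher \cite{Aron}, with the generalization to $\xi$-balanced domains in Caliskan--Rueda \cite{CaRue}), so there is no ``paper's own proof'' to compare against. Your proof follows the classical Aron--Schottenloher line: Cauchy integral plus Mazur's theorem for $(a)\Rightarrow(c)$, and for $(b)\Rightarrow(a)$ the global Taylor expansion at the origin (using balancedness), local boundedness of $f$ on a tubular neighbourhood of the compact disc $\{\lambda x:|\lambda|\le r\}$, and then the uniform-limit-of-compacts argument. All the delicate points you flag --- in particular the need to first secure boundedness on $K+\varepsilon B_E$ before choosing $\delta$ --- are handled correctly.
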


Relating $
\mathcal{H}_w^c(U,F)$ with $\mathcal{H}_k(U,F)$, we prove the following inclusion result.

\begin{proposition}\label{Hw and HK}
	Let $U$ be an open subset of a Banach space $E$ and $\it{w}$ be a weight defined on $U$. Then $\mathcal{H}_w^c(U, F)\subset \mathcal{H}_k(U,F)$ for each Banach space $F$.
\end{proposition}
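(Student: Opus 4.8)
The plan is to verify the defining property of a compact holomorphic map directly: fix $x_0\in U$ and produce an open neighbourhood $V_{x_0}\subset U$ of $x_0$ on which the range of $f$ is relatively compact in $F$. First I would invoke Lemma~\ref{Lemma} to obtain $\epsilon>0$ for which $V_{x_0,\epsilon}=x_0+\epsilon B_E$ is $U$-bounded and contained in $U$; the open ball $V_{x_0}:=x_0+\epsilon U_E$ is then an open neighbourhood of $x_0$ which, being a subset of the $U$-bounded set $V_{x_0,\epsilon}$, is itself $U$-bounded (a neighbourhood of $0$ witnessing $U$-boundedness of $V_{x_0,\epsilon}$ also witnesses it for any subset).

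The key observation is that on a $U$-bounded set the weight is pinched between two positive constants: by the definition of a weight there are $0<c\le M<\infty$ with $c\le w(x)\le M$ for all $x\in V_{x_0}$. Setting $K:=\overline{\{w(x)f(x):x\in U\}}$, which is compact precisely because $f\in\mathcal{H}_w^c(U,F)$, and using the decomposition $f(x)=w(x)^{-1}\bigl(w(x)f(x)\bigr)$, one gets the inclusion $f(V_{x_0})\subset\{\lambda y:\lambda\in[M^{-1},c^{-1}],\ y\in K\}$.

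Finally I would observe that the set on the right is the image of the compact set $[M^{-1},c^{-1}]\times K$ under the jointly continuous scalar-multiplication map $(\lambda,y)\mapsto\lambda y$, hence compact; therefore $f(V_{x_0})$ is relatively compact in $F$. Since $x_0\in U$ was arbitrary and the restriction of $f$ to $V_{x_0}$ is holomorphic, this shows $f\in\mathcal{H}_k(U,F)$, and as $F$ was arbitrary the inclusion $\mathcal{H}_w^c(U,F)\subset\mathcal{H}_k(U,F)$ follows.

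The only point requiring care — and the reason Lemma~\ref{Lemma} is invoked rather than an arbitrary neighbourhood — is the uniform lower bound $w\ge c>0$: in infinite dimensions a ball need not be relatively compact, so continuity and positivity of $w$ by themselves do not bound $1/w$ on a neighbourhood of $x_0$. Passing to a $U$-bounded neighbourhood is exactly what converts the defining inequality for a weight into the estimate needed to control $f=w^{-1}(wf)$.
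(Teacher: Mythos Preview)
Your proof is correct and takes a genuinely more elementary route than the paper's. The paper argues via linearization: from $f\in\mathcal{H}_w^c(U,F)$ it invokes Proposition~\ref{Interplay of holo and linear}(b) to obtain $T_f\in\mathcal{K}(\mathcal{G}_w(U),F)$, writes $f(V_{x,\epsilon})=T_f\bigl(\Delta_w(V_{x,\epsilon})\bigr)$ via Theorem~\ref{Linearization Theorem}, and then uses that $\Delta_w\in\mathcal{H}_w(U,\mathcal{G}_w(U))\subset\mathcal{H}_b(U,\mathcal{G}_w(U))$ sends the $U$-bounded set $V_{x,\epsilon}$ to a bounded subset of $\mathcal{G}_w(U)$, whose image under the compact operator $T_f$ is relatively compact. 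You bypass the predual entirely: working directly from the relative compactness of $K=\overline{\{w(x)f(x):x\in U\}}$ and the two-sided bound $0<c\le w\le M$ guaranteed on any $U$-bounded set by the weight axioms, you exhibit $f(V_{x_0})$ inside the continuous image of the compact set $[M^{-1},c^{-1}]\times K$. The paper's approach is natural within its linearization programme and illustrates how Proposition~\ref{Interplay of holo and linear} transfers compactness, whereas your argument shows that this particular inclusion is a purely pointwise consequence of the definitions and needs none of the $\mathcal{G}_w(U)$ machinery.
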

\begin{proof}
	Let $f\in \mathcal{H}_w^c(U,F)$. Then $T_f\in \mathcal{K}(\mathcal{G}_w(U), F$) by Proposition \ref{Interplay of holo and linear} (b). Fix $x\in U$ arbitraily. Then by Lemma \ref{Lemma}, there exists $\epsilon >0$ such that
	$$V_{x, \epsilon}=x+\epsilon B_E\subset U,$$
	where $V_{x, \epsilon}$ is $U$-bounded.
	By Theorem \ref{Linearization Theorem}, $f(V_{x, \epsilon})=T_f\circ \Delta_w(V_{x, \epsilon}$). Since $\Delta_w\in \mathcal{H}_w(U,\mathcal{G}_w(U)) \subset \mathcal{H}_b(U,\mathcal{G}_w(U))$,
	$\Delta_w(V_{x,\epsilon})$ is bounded in $\mathcal{G}_w(U)$. Thus $f(V_{x, \epsilon})$ is relativlely compact in $F$. Consequently $f\in \mathcal{H}_k(U,F)$.
\end{proof}
\begin{remark}
	Note that the reverse implication in Proposition \ref{Hw and HK} does not hold even in the particular case of $w\equiv 1$, cf. \cite[Example 3.2]{Mujica}.
\end{remark}
\begin{theorem}\label{CAP char}
	
	Let $\it{w}$ be a radial weight on a balanced open subset $U$ of a Banach space $E$
	such that $\mathcal{P}(E)\subset \mathcal{H}_w(U)$. Then the following assertions are equivalent:
	
	(a) $E$ has the CAP.
	
	(b) $\overline{\mathcal{H}_w^c(U,F)}^{\tau_{\mathcal{M}}}=\mathcal{H}_w(U,F)$ for every Banach space $F$.
	
	(c) $\mathcal{H}_w(U,F)\subset\overline{\mathcal{H}_w^c(U,F)}^{\tau_{c}}$  for every Banach space $F$.
	
\end{theorem}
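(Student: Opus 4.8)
The plan is to read off $(a)\Leftrightarrow(b)$ from the literature and then to close the chain with $(b)\Rightarrow(c)\Rightarrow(a)$. Since, by Proposition~\ref{poly containment in weighted holo}, the standing hypothesis $\mathcal{P}(E)\subset\mathcal{H}_w(U)$ is the same as $\mathcal{P}(^mE)\subset\mathcal{H}_w(U)$ for every $m\in\mathbb{N}$, Theorem~\ref{cap holo char} is available, and its equivalence of statements $(i)$ and $(iii)$ is exactly $(a)\Leftrightarrow(b)$. Hence it only remains to establish $(b)\Rightarrow(c)$ and $(c)\Rightarrow(a)$; together with $(a)\Leftrightarrow(b)$ this yields the mutual equivalence of $(a)$, $(b)$ and $(c)$.

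\emph{$(b)\Rightarrow(c)$.} I would first move $(b)$ to the linear side via the isomorphism $\Psi$. By the Linearization Theorem~\ref{Linearization Theorem}, $\Psi(f)=T_f$ is a bijection of $\mathcal{H}_w(U,F)$ onto $\mathcal{L}(\mathcal{G}_w(U),F)$; by Proposition~\ref{Interplay of holo and linear}(b) it takes $\mathcal{H}_w^c(U,F)$ onto $\mathcal{K}(\mathcal{G}_w(U),F)$; and by Theorem~\ref{topological isomorphism} it is a homeomorphism from $(\mathcal{H}_w(U,F),\tau_{\mathcal M})$ onto $(\mathcal{L}(\mathcal{G}_w(U),F),\tau_c)$. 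Therefore $(b)$ is equivalent to $\overline{\mathcal{K}(\mathcal{G}_w(U),F)}^{\tau_c}=\mathcal{L}(\mathcal{G}_w(U),F)$. Now let $f\in\mathcal{H}_w(U,F)$, let $K\subset U$ be compact and let $\epsilon>0$. The set $\Delta_w(K)$ is compact in $\mathcal{G}_w(U)$ (as $\Delta_w$ is continuous), so there is $S\in\mathcal{K}(\mathcal{G}_w(U),F)$ with $\sup_{z\in\Delta_w(K)}\|T_fz-Sz\|<\epsilon$. Putting $g=S\circ\Delta_w$ we get $g\in\mathcal{H}_w(U,F)$ with $\|g\|_w\le\|S\|\,\|\Delta_w\|_w\le\|S\|$; by the uniqueness part of Theorem~\ref{Linearization Theorem}, $T_g=S$, so $g\in\mathcal{H}_w^c(U,F)$ by Proposition~\ref{Interplay of holo and linear}(b), and
\[
\sup_{x\in K}\|f(x)-g(x)\|=\sup_{z\in\Delta_w(K)}\|T_fz-Sz\|<\epsilon .
\]
Since $K$ and $\epsilon$ were arbitrary, $f\in\overline{\mathcal{H}_w^c(U,F)}^{\tau_c}$, which is $(c)$.

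\emph{$(c)\Rightarrow(a)$.} I would apply $(c)$ with $F=E$ to the inclusion $j:U\hookrightarrow E$, $j(x)=x$. This $j$ is holomorphic and lies in $\mathcal{H}_w(U,E)$: for each $\phi\in\mathcal{P}(^1E)\subset\mathcal{H}_w(U)$ one has $\sup_{x\in U}w(x)|\phi(x)|<\infty$, and the uniform boundedness principle then shows that $\{w(x)x:x\in U\}$ is norm-bounded, i.e.\ $\|j\|_w=\sup_{x\in U}w(x)\|x\|<\infty$. Fix a compact $K\subset U$ and $\epsilon>0$, and replace $K$ by its balanced hull, which is again compact and, since $U$ is balanced, still contained in $U$. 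By $(c)$ pick $g\in\mathcal{H}_w^c(U,E)$ with $\sup_{x\in K}\|g(x)-x\|<\epsilon$. Then $g\in\mathcal{H}_k(U,E)$ by Proposition~\ref{Hw and HK}, so by Proposition~\ref{Aron result} the first Taylor coefficient $T:=P_1g(0)$ lies in $\mathcal{P}_k(^1E,E)=\mathcal{K}(E,E)$. For $x\in K$ the Cauchy integral formula (valid because $\lambda\mapsto g(\lambda x)$ is holomorphic on a neighbourhood of the closed unit disc, $U$ being balanced) together with $j(\lambda x)=\lambda x$ gives
\[
T(x)-x=\frac{1}{2\pi i}\int_{|\lambda|=1}\frac{g(\lambda x)-\lambda x}{\lambda^{2}}\,d\lambda ,
\]
whence $\|T(x)-x\|\le\sup_{|\lambda|=1}\|g(\lambda x)-\lambda x\|\le\sup_{y\in K}\|g(y)-y\|<\epsilon$, the second inequality because $\lambda x\in K$ for $|\lambda|=1$ ($K$ being balanced). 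Thus $I_E$ is uniformly approximable by compact operators on every compact subset of $U$; a standard rescaling of compact subsets of $E$ into $U$, using the linearity of the approximating operators, then gives $I_E\in\overline{\mathcal{K}(E,E)}^{\tau_c}$, i.e.\ $E$ has the CAP.

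The genuinely new content is the direction $(c)\Rightarrow(a)$, and the only delicate point there is passing from approximating $j$ on a compact set $K$ to approximating $I_E$ on $K$: this is what forces one to enlarge $K$ to its balanced hull (so that $\lambda x$ stays in $K$ on the integration circle) and to apply the Cauchy estimates, which cost no constant. Everything else is bookkeeping through the isometric–topological isomorphism $\Psi$ and the already established facts that $E$ has the CAP iff $\mathcal{G}_w(U)$ does (Theorem~\ref{cap holo char}) and that $\tau_{\mathcal M}$ corresponds to $\tau_c$ under $\Psi$ (Theorem~\ref{topological isomorphism} and Proposition~\ref{Interplay of holo and linear}).
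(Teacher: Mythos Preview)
Your proof is correct, but both nontrivial implications diverge from the paper's approach.

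For $(b)\Rightarrow(c)$ you transport $(b)$ through $\Psi$ to the linear side and then pull an approximant back along $\Delta_w$. The paper does this in one line: since $\tau_c\le\tau_{\mathcal{M}}$ on $\mathcal{H}_w(U,F)$, one has $\overline{\mathcal{H}_w^c(U,F)}^{\tau_{\mathcal{M}}}\subset\overline{\mathcal{H}_w^c(U,F)}^{\tau_c}$ and $(c)$ is immediate. Your argument is sound but unnecessarily elaborate.

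For $(c)\Rightarrow(a)$ the difference is more substantial. You approximate the inclusion $j:U\hookrightarrow E$ by some $g\in\mathcal{H}_w^c(U,E)$, extract the first Taylor coefficient $T=P_1g(0)\in\mathcal{K}(E,E)$ via Proposition~\ref{Aron result}, control $\|T(x)-x\|$ on a balanced compact set with the Cauchy integral formula, and finish with a rescaling into $U$. The paper instead takes an arbitrary $P\in\mathcal{P}(^mE,F)$, approximates it in $\tau_c$ by a net in $\mathcal{H}_w^c(U,F)\subset\mathcal{H}_k(U,F)$, and applies the $\tau_c$-continuous projection $Q_m(f)=P_mf(0)$ to land in $\mathcal{P}_k(^mE,F)$; Proposition~\ref{cap poly char} then gives the CAP. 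The paper's route avoids the balanced-hull and rescaling manoeuvres and works uniformly for every $m$ and every target space $F$, at the cost of invoking the polynomial characterization of the CAP; your route is more self-contained (it goes straight to the definition of the CAP) but needs the extra geometric bookkeeping. Both are clean; the paper's is shorter.
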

\begin{proof}
	$(a) \implies (b)$ is the same as $(i)\Rightarrow (iii)$ of Theorem \ref{cap holo char}.

	$(b)\implies (c)$:  Since $\tau_c\leq \tau_{\mathcal{M}}$,  $\overline{\mathcal{H}_w^c(U,F)}^{\tau_{\mathcal{M}}}\subset  \overline{\mathcal{H}_w^c(U,F)}^{\tau_{c}}$ for each Banach space $F$. Thus the implication holds.
	
	$(c)\implies (a)$: Let $P\in \mathcal{P}(^mE,F)$. By hypothesis and Proposition \ref{poly containment in weighted holo},  $\mathcal{P}(E, F)\subset \mathcal{H}_w(U, F)$ for each Banach space $F$. Therefore $P\in \overline{\mathcal{H}_w^c(U,F)}^{\tau_{c}}$ by using $(c)$. Thus there exists a net $(f_{\alpha})_{\alpha\in \Lambda}\subset \mathcal{H}_w^c(U,F)$ such that $f_{\alpha}\xrightarrow{\tau_c} P.$ 
	
	For each $m\in\mathbb{N}$, define $Q_m:(\mathcal{H}(U,F), \tau_c)\rightarrow \mathcal{P}(^mE,F)$ as $Q_m(f)=P_mf(0)$, $f\in \mathcal{H}(U,F)$. Now, $Q_m$ is a continuous projection for each $m\in \mathbb{N}$, by \cite[Proposition 3.22]{Dineen2}. Therefore $Q_m(f_{\alpha})\xrightarrow{\tau_c} Q_m(P)$, that is, $P^mf_{\alpha}(0)\xrightarrow{\tau_c} P$, where $P^mf_{\alpha}(0)\in \mathcal{P}_k(^mE,F)$ as $f_{\alpha}\in  \mathcal{H}_w^c(U, F)$ and $\mathcal{H}_w^c(U, F)\subset\mathcal{H}_k(U,F)$. Thus 
	$\overline{\mathcal{P}_k(^mE,F)}^{\tau_c}=\mathcal{P}(^mE,F)$. Hence $E$ has the CAP by Proposition \ref{cap poly char}.
\end{proof}

\begin{theorem}\label{I_U cap char}
	
	Let $\it{w}$ be a radial weight on a balanced open subset $U$ of a Banach space $E$
	such that $\mathcal{P}(E)\subset \mathcal{H}_w(U)$. Then the following assertions are equivalent:
	
	(a) $E$ has the CAP.
	
	(b) $I_U\in \overline{\mathcal{H}_w^c(U,E)}^{\tau_{\mathcal{M}}}$, where $I_U:U\rightarrow E$ is the inclusion mapping.
\end{theorem}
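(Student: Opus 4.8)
The plan is to derive both implications from Theorem~\ref{CAP char}, after the preliminary observation that under the standing hypotheses the inclusion $I_U$ is itself a weighted holomorphic map.

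For $(a)\Rightarrow(b)$ I would first check that $I_U\in\mathcal{H}_w(U,E)$: the identity $\mathrm{id}_E$ is a continuous $1$-homogeneous polynomial, so $\mathrm{id}_E\in\mathcal{P}(^1E,E)$, and since $\mathcal{P}(E)\subset\mathcal{H}_w(U)$ by hypothesis, Proposition~\ref{poly containment in weighted holo} (with $m=1$) gives $\mathcal{P}(^1E,E)\subset\mathcal{H}_w(U,E)$; as $I_U=\mathrm{id}_E|_U$, this yields $I_U\in\mathcal{H}_w(U,E)$. I would then invoke the equivalence $(a)\Leftrightarrow(b)$ of Theorem~\ref{CAP char} with $F=E$, which gives $\overline{\mathcal{H}_w^c(U,E)}^{\tau_{\mathcal{M}}}=\mathcal{H}_w(U,E)$, so that $I_U$ lies in this closure.

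For the converse $(b)\Rightarrow(a)$ I would pass to the coarser topology and extract the linear part of an approximating net. Since $\tau_c\leq\tau_{\mathcal{M}}$, hypothesis (b) provides a net $(f_\alpha)\subset\mathcal{H}_w^c(U,E)$ with $f_\alpha\xrightarrow{\tau_c}I_U$. By Proposition~\ref{Hw and HK} each $f_\alpha$ lies in $\mathcal{H}_k(U,E)$, so Proposition~\ref{Aron result} gives $P_1 f_\alpha(0)\in\mathcal{P}_k(^1E,E)=\mathcal{K}(E,E)$. Applying the continuous projection $Q_1\colon(\mathcal{H}(U,E),\tau_c)\to(\mathcal{P}(^1E,E),\tau_c)$, $Q_1(g)=P_1 g(0)$ (continuity by \cite[Proposition~3.22]{Dineen2}, just as in the proof of Theorem~\ref{CAP char}), and using that the Taylor expansion of $I_U$ at $0$ is $I_U(x)=x$ so that $P_1 I_U(0)=\mathrm{id}_E$, I obtain $P_1 f_\alpha(0)\xrightarrow{\tau_c}\mathrm{id}_E$. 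Hence $\mathrm{id}_E\in\overline{\mathcal{K}(E,E)}^{\tau_c}$, and unravelling the definition of $\tau_c$ this says exactly that for every compact set $K\subset E$ and every $\varepsilon>0$ there is $T\in\mathcal{K}(E,E)$ with $\sup_{x\in K}\|T(x)-x\|<\varepsilon$; that is, $E$ has the CAP.

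Throughout, the argument relies on $U$ being balanced (so that $0\in U$ and both the Taylor series of $I_U$ at the origin and Proposition~\ref{Aron result} are available) and on $w$ being radial with $\mathcal{P}(E)\subset\mathcal{H}_w(U)$ (so that Theorem~\ref{CAP char} applies and $I_U\in\mathcal{H}_w(U,E)$). I do not expect a genuine obstacle here; the one delicate point lies in $(b)\Rightarrow(a)$, namely that passing to the first Taylor coefficient is a $\tau_c$-continuous operation sending $\mathcal{H}_w^c(U,E)$ into $\mathcal{K}(E,E)$ — and this is precisely what Propositions~\ref{Hw and HK} and~\ref{Aron result} are designed to secure.
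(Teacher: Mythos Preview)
Your argument is correct. The forward implication $(a)\Rightarrow(b)$ is handled exactly as in the paper: invoke Theorem~\ref{CAP char} with $F=E$ and observe that $I_U\in\mathcal{H}_w(U,E)$.

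For $(b)\Rightarrow(a)$ you take a genuinely different route. The paper linearizes: from $f_\alpha\xrightarrow{\tau_{\mathcal{M}}}I_U$ it passes via Theorem~\ref{topological isomorphism} to $T_{f_\alpha}\xrightarrow{\tau_c}T_{I_U}$, then sets $S=\hat d^{\,1}\Delta_w(0)\in\mathcal{L}(E,\mathcal{G}_w(U))$ and shows $T_{I_U}\circ S=I_E$, so that $T_{f_\alpha}\circ S\xrightarrow{\tau_c}I_E$ with $T_{f_\alpha}\circ S\in\mathcal{K}(E,E)$ by Proposition~\ref{Interplay of holo and linear}(b). You instead drop to $\tau_c$, apply the $\tau_c$-continuous first Taylor projection $Q_1$, and use Propositions~\ref{Hw and HK} and~\ref{Aron result} to see that $P_1f_\alpha(0)\in\mathcal{K}(E,E)$ and $P_1f_\alpha(0)\xrightarrow{\tau_c}I_E$. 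The two approaches are in fact computing the same operator, since $P_1f_\alpha(0)=\hat d^{\,1}(T_{f_\alpha}\circ\Delta_w)(0)=T_{f_\alpha}\circ S$; what differs is the machinery invoked. Your route is lighter (no linearization theorem, no Theorem~\ref{topological isomorphism}), while the paper's route is the one that extends verbatim to the $\mathcal{I}$-approximation property in Section~4, where an analogue of Proposition~\ref{Aron result} for arbitrary ideals is not available but Theorem~\ref{interplay wrt arb OI} is.
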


\begin{proof}
	$(a)\implies (b)$:  Let $E$ has the CAP. Then by Theorem 3.5 ($(a)\implies (b)$),
	$\overline{\mathcal{H}_w^c(U,F)}^{\tau_{\mathcal{M}}}=\mathcal{H}_w(U,F)$. Since $I_U\in \mathcal{P}(^mE,E)$ and $\mathcal{P}(^mE,E)\subset \mathcal{H}_w(U,E)$, 
	$I_U\in \overline{\mathcal{H}_w^c(U,E)}^{\tau_{\mathcal{M}}}.$
	
	$(b)\implies (a)$: By (b), there exist a net $(f_{\alpha})_{\alpha\in \Lambda}\subset\mathcal{H}_w^c(U,E)$  such that $f_{\alpha}\xrightarrow{\tau_{\mathcal{M}}} I_U$. 
	By Theorem \ref{topological isomorphism}, $T_{f_{\alpha}}\xrightarrow{\tau_c}T_{I_U}\equiv T$. 
	
	Define $S=d^1\Delta_w(0)$. Then $S\in \mathcal{L}(E, \mathcal{G}_w(U))$ and, by the Cauchy integral formula $$S(t)=\frac{1}{2\pi i}\int_{|\xi|=r}\frac{\Delta_w(\xi t)}{\xi^2} \,d \xi, ~~t\in E,$$
	where $r>0$ is chosen such that $\{\xi t: |\xi|\leq r\}\subset U$. 
	Note that $T\circ \Delta_w=I_E$, where $I_E:E\rightarrow E$ is the identity operator on $E$. Therefore $$T\circ S(t)=\frac{1}{2\pi i}\int_{|\xi|=r}\frac{ t}{\xi} \,d \xi= t, ~\textrm{for all~} t\in E.$$
	Now, $T_{f_{\alpha}}\circ S\xrightarrow{\tau_c} T\circ S$ gives $T_{\alpha}\circ S\xrightarrow{\tau_c} I_E$. Since $T_{f_{\alpha}}\in \mathcal{K}(\mathcal{G}_w(U), E)$ by Proposition \ref{Interplay of holo and linear}(b), $T_{f_{\alpha}}\circ S\in \mathcal{K}(E, E)$ for each $\alpha\in \Lambda$ and $(a)$ follows.
\end{proof}
The next result of this section deals with a characterization of the bounded approximation property in terms of finite rank weighted holomorphic mappings.

\begin{theorem}
	Let $\it{w}$ be a bounded weight on the open unit ball $U_E$ of a Banach space $E$. Then the following assertions are equivalent:
	
	(a)	$E$ has the BAP.
	
	(b) $\overline{B_{\mathcal{H}_w(U_E)\bigotimes E}^{\lambda}}^{\tau_c}=B_{\mathcal{H}_w(U_E,E)}$ for some $\lambda$, $1\leq \lambda <\infty$.
	
	(c) $I_{U_E}\in \overline{B_{\mathcal{H}_w(U_E)\bigotimes E}^{{\lambda}^{\prime}}}^{\tau_c}$ for some $\lambda^{\prime}$, $1\leq \lambda^{\prime} <\infty$.
\end{theorem}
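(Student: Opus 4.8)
The plan is to establish the cycle $(a)\Rightarrow(b)\Rightarrow(c)\Rightarrow(a)$, transporting each statement through the linearization isometry $\Psi$ of Theorem~\ref{Linearization Theorem} into a statement about operators on $\mathcal{G}_w(U_E)$, and then appealing to the classical characterization of the bounded approximation property in Theorem~\ref{bap char}. Three remarks fix the setup. First, since $w$ is bounded and $\|x\|\le 1$ on $U_E$, one has $\|I_{U_E}\|_w=\sup_{x\in U_E}w(x)\|x\|\le\sup_{U_E}w<\infty$, so $I_{U_E}\in\mathcal{H}_w(U_E,E)$; put $c=\max\{1,\|I_{U_E}\|_w\}$. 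Second, by Proposition~\ref{Interplay of holo and linear}$(a)$ the isometry $\Psi$ maps $\mathcal{H}_w(U_E)\otimes E$ onto $\mathcal{F}(\mathcal{G}_w(U_E),E)$, and being isometric it carries $B^{\mu}_{\mathcal{H}_w(U_E)\otimes E}$ onto $B^{\mu}_{\mathcal{F}(\mathcal{G}_w(U_E),E)}$ for every $\mu\ge 0$. Third, $\|\cdot\|_w$ is $\tau_c$-lower semicontinuous, being a supremum of the $\tau_c$-continuous functions $f\mapsto w(x)\|f(x)\|$, so $\tau_c$-limits of $\mu$-bounded nets remain $\mu$-bounded; this handles the routine inclusion in $(b)$ exactly as in Theorem~\ref{bap char}.

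For $(a)\Rightarrow(b)$, assume $E$ has the $\lambda$-BAP and fix $f\in B_{\mathcal{H}_w(U_E,E)}$, so that $\|T_f\|=\|f\|_w\le 1$. The characterization $\overline{B^{\lambda}_{\mathcal{F}(F,E)}}^{\tau_c}=B_{\mathcal{L}(F,E)}$ in Theorem~\ref{bap char}, applied with $F=\mathcal{G}_w(U_E)$, yields a net $(S_\alpha)\subset B^{\lambda}_{\mathcal{F}(\mathcal{G}_w(U_E),E)}$ with $S_\alpha\xrightarrow{\tau_c}T_f$. Set $f_\alpha=\Psi^{-1}(S_\alpha)$; then $f_\alpha\in\mathcal{H}_w(U_E)\otimes E$ and $\|f_\alpha\|_w=\|S_\alpha\|\le\lambda$, so the net $(f_\alpha)$ together with the point $f$ lies in the $\|\cdot\|_w$-bounded set $B^{\lambda}_{\mathcal{H}_w(U_E,E)}$. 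By Theorem~\ref{topo bounded iso}, $\Psi^{-1}$ is a $\tau_c$-homeomorphism on this bounded set, hence $f_\alpha\xrightarrow{\tau_c}\Psi^{-1}(T_f)=f$, that is, $f\in\overline{B^{\lambda}_{\mathcal{H}_w(U_E)\otimes E}}^{\tau_c}$. Combined with the third remark above, this proves $(b)$ for this $\lambda$.

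For $(b)\Rightarrow(c)$: since $\|c^{-1}I_{U_E}\|_w\le 1$, $(b)$ provides a net $(g_\alpha)\subset B^{\lambda}_{\mathcal{H}_w(U_E)\otimes E}$ with $g_\alpha\xrightarrow{\tau_c}c^{-1}I_{U_E}$; multiplication by $c$ is $\tau_c$-continuous and preserves finite-dimensionality of the range, so $cg_\alpha\xrightarrow{\tau_c}I_{U_E}$ with $cg_\alpha\in B^{c\lambda}_{\mathcal{H}_w(U_E)\otimes E}$, and $(c)$ holds with $\lambda'=c\lambda$. For $(c)\Rightarrow(a)$: choose $(f_\alpha)\subset B^{\lambda'}_{\mathcal{H}_w(U_E)\otimes E}$ with $f_\alpha\xrightarrow{\tau_c}I_{U_E}$ (note $\lambda'\ge\|I_{U_E}\|_w$ necessarily). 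The net is $\|\cdot\|_w$-bounded, so Theorem~\ref{topo bounded iso} gives $T_{f_\alpha}\xrightarrow{\tau_c}T_{I_{U_E}}=:T$, where $T_{f_\alpha}\in\mathcal{F}(\mathcal{G}_w(U_E),E)$ by Proposition~\ref{Interplay of holo and linear}$(a)$ and $\|T_{f_\alpha}\|\le\lambda'$. Exactly as in the proof of Theorem~\ref{I_U cap char}, the operator $S=d^1\Delta_w(0)\in\mathcal{L}(E,\mathcal{G}_w(U_E))$ satisfies $T\circ S=I_E$, via the Cauchy integral formula together with $T\circ\Delta_w=I_{U_E}$. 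Since $S$ maps compact subsets of $E$ to compact subsets of $\mathcal{G}_w(U_E)$, we get $T_{f_\alpha}\circ S\xrightarrow{\tau_c}I_E$, with $T_{f_\alpha}\circ S\in\mathcal{F}(E,E)$ and $\|T_{f_\alpha}\circ S\|\le\lambda'\|S\|$. Thus $I_E\in\overline{B^{\lambda'\|S\|}_{\mathcal{F}(E,E)}}^{\tau_c}$, which is precisely the assertion that $E$ has the $(\lambda'\|S\|)$-bounded approximation property; in particular $E$ has the BAP.

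The main obstacle — really the only subtle point — is that $\Psi$ transfers $\tau_c$-convergence faithfully between $\mathcal{H}_w(U_E,E)$ and $\mathcal{L}(\mathcal{G}_w(U_E),E)$ only on norm-bounded sets: Theorem~\ref{topological isomorphism} by itself is insufficient, since it matches $\tau_c$ on the operator side with the strictly finer topology $\tau_{\mathcal{M}}$ on the holomorphic side. Everything therefore hinges on the approximating nets being uniformly $\|\cdot\|_w$-bounded, which is exactly what the parameters $\lambda$ and $\lambda'$ guarantee and what makes Theorem~\ref{topo bounded iso} applicable in $(a)\Rightarrow(b)$ and $(c)\Rightarrow(a)$; apart from this, the proof consists of a rescaling step and Grothendieck's theorem, the sole infinite-dimensional holomorphy ingredient being the map $S=d^1\Delta_w(0)$ already isolated in Theorem~\ref{I_U cap char}.
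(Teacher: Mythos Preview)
Your proof is correct and follows essentially the same route as the paper's: the cycle $(a)\Rightarrow(b)\Rightarrow(c)\Rightarrow(a)$ is established by pushing everything through the linearization isometry, invoking Theorem~\ref{bap char} for $(a)\Rightarrow(b)$, rescaling $I_{U_E}$ for $(b)\Rightarrow(c)$, and composing with $S=d^1\Delta_w(0)$ for $(c)\Rightarrow(a)$, with Theorem~\ref{topo bounded iso} providing the $\tau_c$-transfer on bounded sets at each stage. Your exposition is in places slightly more careful than the paper's (e.g.\ setting $c=\max\{1,\|I_{U_E}\|_w\}$ to guarantee $\lambda'\ge 1$, and writing $\lambda'\|S\|$ rather than asserting $\|S\|\le 1$), but the argument is the same.
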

\begin{proof}
	$(a) \implies (b)$: Assume that $E$ has the $\lambda$-BAP  for some $\lambda$, $1\leq \lambda <\infty$ and $f\in B_{\mathcal{H}_w(U_E,E)}$. Then by Theorem \ref{Linearization Theorem}, $T_f\in B_{\mathcal{L}(\mathcal{G}_w(U_E), E)}$. Since $E$ has the $\lambda$-BAP, there exists a net $(T_{\alpha})_{\alpha\in \Lambda}\subset B^{\lambda}_{\mathcal{F}(\mathcal{G}_w(U_E), E)}$
	such that $T_{\alpha}\xrightarrow{\tau_c} T_f$ by Theorem \ref{bap char}($(i) \Leftrightarrow (iii)$).
	
	Define $f_{\alpha}= T_{\alpha}\circ \Delta_w$ for each $\alpha\in \Lambda$. Note that $(f_{\alpha})_{\alpha\in \Lambda}\subset \mathcal{H}_w(U_E)\bigotimes E$ by Proposition \ref{Interplay of holo and linear}(a) and $f_{\alpha} \xrightarrow{\tau_{c}} f$ by Theorem \ref{topo bounded iso}.  Also $\|f_{\alpha}\|_w\leq \|T_{\alpha}\|\leq \lambda$
	for each $\alpha \in \Lambda$ since $\|\Delta_w\|_w\leq 1$. Thus 
	$f\in\overline{B_{\mathcal{H}_w(U_E)\bigotimes E}^{\lambda}}^{\tau_c}$ and hence $(b)$ follows.
	
	$(b) \implies (c)$:	 Since $\it{w}$ is bounded on $U_E$, $I_{U_E}\in \mathcal{H}_w(U_E, E)$.  Now $\frac{I_{U_E}}{\|I_{U_E}\|_w}\in B_{\mathcal{H}_w(U_E, E)}$.
	
	By $(b)$, there exists a net  $(f_{\alpha})_{\alpha\in \Lambda}\subset \mathcal{H}_w(U_E)\bigotimes E$ with $\|f_{\alpha}\|\leq \lambda$ for each $\alpha\in \Lambda$, such that
	$f_{\alpha}\xrightarrow{\tau_{c}} \frac{I_{U_E}}{\|I_{U_E}\|_w}$. Define $g_{\alpha}=f_{\alpha}\|I_{U_E}\|_w$ for each $\alpha\in \Lambda$. Then $\|g_{\alpha}\|\leq \lambda \|I_{U_E}\|_w$ for each $\alpha\in \Lambda$ and $g_{\alpha}\xrightarrow{\tau_{c}} I_{U_E}$.
	Thus $ I_{U_E}\in \overline{B_{\mathcal{H}_w(U_E)\bigotimes E}^{\lambda^{\prime}}}^{\tau_c}$ with $\lambda^{\prime}=\lambda\|I_{U_E}\|_w$.
	
	$(c) \implies (a)$: Let $ I_{U_E}\in \overline{B_{\mathcal{H}_w(U_E)\bigotimes E}^{\lambda^{\prime}}}^{\tau_c}$. Then by Theorem \ref{topo bounded iso} there exists a net  $(f_{\alpha})_{\alpha\in \Lambda}\subset \mathcal{H}_w(U_E)\bigotimes E$ with $\|f_{\alpha}\|\leq \lambda^{\prime}$ for each $\alpha\in \Lambda$ such that $f_{\alpha}\xrightarrow{\tau_c}I_{U_E}$. Then $T_{f_{\alpha}}\xrightarrow{\tau_c}T_{I_{U_E}}$ and $\|T_{f_{\alpha}}\|= \|f_{\alpha}\|$ for each $\alpha\in \Lambda$. Proceeding as in the proof of Theorem 3.6((b)$\implies$(a)), we have 
	$T_{f_{\alpha}}\circ S\xrightarrow{\tau_c}T_{I_{U_E}}\circ S=I_E,$ where $S\in \mathcal{L}(E,\mathcal{G}_w(U_E))$ with $\| S\|\leq 1$. Thus $I_E\in \overline{B_{\mathcal{F}(E,E) }^{\lambda^{\prime}}}^{\tau_c}$ and hence $E$ has the BAP.
\end{proof}
\begin{remark}
	Let us note that the above result hold for the $\lambda$-BAP in case $w$ is a bounded weight defined on $U_E$ with $\sup\limits_{x\in U_E}w(x)\leq 1$. In case $w\equiv 1$, we have the following characterization of the $\lambda$-BAP for a Banach space $E$.
\end{remark}
\begin{proposition} Let $U_E$ be the  open unit ball of a Banach space $E$ and $1\leq \lambda < \infty$. Then for each Banach space $F$, the following assertions are equivalent:
	
	(a)	$E$ has the $\lambda$-BAP.
	
	(b) $\overline{B_{\mathcal{H}^{\infty}(V)\bigotimes E}^{\lambda}}^{\tau_c}=B_{\mathcal{H}^{\infty}(V,E)}$ for each open subset $V\subset F$.
	
	(c) $I_{U_E}\in \overline{B_{\mathcal{H}^{\infty}(U_E)\bigotimes E}^{{\lambda}}}^{\tau_c}$.	
\end{proposition}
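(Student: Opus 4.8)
The plan is to transcribe, essentially word for word, the proof of the theorem immediately preceding this proposition, now with the constant weight $w\equiv 1$ on every domain in sight, so that $\mathcal{H}_w=\mathcal{H}^{\infty}$ and $\|\cdot\|_w=\|\cdot\|_{\infty}$; I write $\mathcal{G}_w(V)$ and $\Delta_w$ for the linearisation data furnished by Theorem \ref{Linearization Theorem} applied to $\mathcal{H}^{\infty}(V)$. The single gain over the weighted statement is that $\sup_{x\in U_E}w(x)=1$, so no constant is lost along the way and the sharper $\lambda$-BAP (rather than just the BAP) survives all three implications --- this is precisely the phenomenon recorded in the Remark above.

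For $(a)\Rightarrow(b)$ I would fix a Banach space $F$, an open set $V\subseteq F$ and $f\in B_{\mathcal{H}^{\infty}(V,E)}$. By Theorem \ref{Linearization Theorem}, $T_f\in\mathcal{L}(\mathcal{G}_w(V),E)$ with $\|T_f\|=\|f\|_{\infty}\le 1$. Applying Theorem \ref{bap char} ($(i)\Leftrightarrow(iii)$) with the Banach space $\mathcal{G}_w(V)$ in the role of $F$, the $\lambda$-BAP of $E$ produces a net $(T_\alpha)\subseteq B^{\lambda}_{\mathcal{F}(\mathcal{G}_w(V),E)}$ with $T_\alpha\xrightarrow{\tau_c}T_f$. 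Put $f_\alpha:=T_\alpha\circ\Delta_w$; then $f_\alpha\in\mathcal{H}^{\infty}(V)\otimes E$ by Proposition \ref{Interplay of holo and linear}(a), $\|f_\alpha\|_{\infty}=\|T_\alpha\|\le\lambda$ since $\|\Delta_w\|_w\le 1$, and --- because $(T_\alpha)$ together with $T_f$ lies inside the norm-bounded set $B^{\lambda}_{\mathcal{L}(\mathcal{G}_w(V),E)}$ --- Theorem \ref{topo bounded iso} gives $f_\alpha\xrightarrow{\tau_c}f$. Hence $B_{\mathcal{H}^{\infty}(V,E)}\subseteq\overline{B^{\lambda}_{\mathcal{H}^{\infty}(V)\otimes E}}^{\tau_c}$ (the reverse containment being automatic), which is $(b)$.

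For $(b)\Rightarrow(c)$ I would invoke $(b)$ with $V=U_E\subset E$: since $w\equiv 1$ is bounded on $U_E$ one has $I_{U_E}\in\mathcal{H}^{\infty}(U_E,E)$ with $\|I_{U_E}\|_{\infty}=\sup_{x\in U_E}\|x\|=1$, so $I_{U_E}\in B_{\mathcal{H}^{\infty}(U_E,E)}=\overline{B^{\lambda}_{\mathcal{H}^{\infty}(U_E)\otimes E}}^{\tau_c}$, which is $(c)$; observe that, in contrast with the weighted theorem, no inflation of $\lambda$ takes place here. For $(c)\Rightarrow(a)$ I would use $(c)$ to pick a net $(f_\alpha)\subseteq\mathcal{H}^{\infty}(U_E)\otimes E$ with $\|f_\alpha\|_{\infty}\le\lambda$ and $f_\alpha\xrightarrow{\tau_c}I_{U_E}$. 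By Theorem \ref{Linearization Theorem} and Proposition \ref{Interplay of holo and linear}(a), $T_{f_\alpha}\in\mathcal{F}(\mathcal{G}_w(U_E),E)$ with $\|T_{f_\alpha}\|=\|f_\alpha\|_{\infty}\le\lambda$, and since this net and its limit again lie in the $\lambda$-ball, Theorem \ref{topo bounded iso} gives $T_{f_\alpha}\xrightarrow{\tau_c}T:=T_{I_{U_E}}$, where $T\circ\Delta_w=I_{U_E}$. Now I would copy the argument of Theorem \ref{I_U cap char} ($(b)\Rightarrow(a)$): set $S:=d^1\Delta_w(0)\in\mathcal{L}(E,\mathcal{G}_w(U_E))$, represented by $S(t)=\frac{1}{2\pi i}\int_{|\xi|=r}\xi^{-2}\Delta_w(\xi t)\,d\xi$ for any $r<\|t\|^{-1}$; the Cauchy estimate (letting $r\nearrow\|t\|^{-1}$) gives $\|S\|\le\|\Delta_w\|_w\le 1$, and integrating the identity $T\circ\Delta_w=I_{U_E}$ over $|\xi|=r$ gives $T\circ S=I_E$. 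Then $T_{f_\alpha}\circ S\xrightarrow{\tau_c}I_E$, with $T_{f_\alpha}\circ S\in\mathcal{F}(E,E)$ and $\|T_{f_\alpha}\circ S\|\le\|T_{f_\alpha}\|\,\|S\|\le\lambda$; hence $I_E\in\overline{B^{\lambda}_{\mathcal{F}(E,E)}}^{\tau_c}$ and $E$ has the $\lambda$-BAP by Theorem \ref{bap char} ($(iv)\Leftrightarrow(i)$).

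Since every ingredient is a direct specialisation of results already on hand, I do not expect a genuine obstacle; the points needing care are (i) tracking all norm bounds tightly --- exploiting $\sup_{x\in U_E}w(x)=1$, $\|\Delta_w\|_w\le 1$ and $\|S\|\le 1$ --- so that the \emph{same} constant $\lambda$ is carried through every implication, and (ii) checking that each net together with its $\tau_c$-limit stays inside one fixed norm-bounded set, which is exactly what allows the use of Theorem \ref{topo bounded iso} (and not merely Theorem \ref{topological isomorphism}, which links $\tau_{\mathcal{M}}$ on the holomorphic side to $\tau_c$ on the operator side) to move $\tau_c$-convergence back and forth across the linearisation.
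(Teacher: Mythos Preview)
Your proposal is correct and follows essentially the same route as the paper. The paper is terser: for $(a)\Rightarrow(b)\Rightarrow(c)$ it simply cites \cite{Caliskan2} (noting the key normalisation $\|P\|=1$ for $U=U_E$ from \cite{Mujica}), and for $(c)\Rightarrow(a)$ it invokes the corresponding implication of the preceding theorem with $w\equiv 1$; you instead work these implications out explicitly from the linearisation machinery, which amounts to the same argument with the constants tracked in full.
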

\begin{proof}
	Since $\|P\|=1$ when $U=U_E$, cf.\cite[Proposition 2.3(b)]{Mujica}, the implications (a) $\implies$ (b) $\implies$ (c) are proved in \cite{Caliskan2} as (c) $\implies$ (d) $\implies$ (e) of Proposition 3.
	
	$(c) \implies (a)$ is a consequence of $(c)\implies (a)$ of Theorem 3.7 by taking $w\equiv 1$.
\end{proof}
\section{$\mathcal{H}_w(U,F)$ and  $\mathcal{I}-$ approximation property}

In this section we study  $\mathcal{I}-$ approximation property for the predual of a weighted space of holomorphic functions.
A Banach space $E$ is said to have \emph{$\mathcal{I}$-approximation property} if the identity operator on $E$ is uniformly
approximated by a member of $\mathcal{I}(E,E)$ on compact subsets of $E$.  

Let us first recall the multilinear/polynomial ideal generalization of an operator ideal from \cite{Botelho1, Botelho2}.
\begin{definition}\label{polynomial ideals} Let $\mathcal{I},~ \mathcal{I}_1, ~\mathcal{I}_2,\dots,\mathcal{I}_m$ be operator ideals.
	
	(a) A mapping $A\in \mathcal{L}(E_1, E_2, \dots, E_m;F)$ is said to be of \emph{type $\mathcal{L}[\mathcal{I}_1, \mathcal{I}_2, \dots, \mathcal{I}_m ]$} if there exist Banach spaces $G_1, G_2, \dots, G_m$, operators $u_j\in \mathcal{I}_j(E_j, G_j)$, $j=1,2,\dots, m$, and a mapping $B\in \mathcal{L}(G_1, G_2, \dots, G_m;F)$ such that $A=B\circ (u_1, u_2, \dots, u_m)$. For $\mathcal{I}=\mathcal{I}_1=\mathcal{I}_2=\dots=\mathcal{I}_m$, we write $\mathcal{L}[\mathcal{I}_1, \mathcal{I}_2, \dots, \mathcal{I}_m ]=\mathcal{L}[\mathcal{I}]$.
	
	(b) \emph{Composition Ideal of Multilinear Operators}:  A mapping $A\in \mathcal{L}(E_1, E_2, \dots, E_m;F)$ belongs to $\mathcal{I}\circ \mathcal{L}$ if there are Banach space $G$, a mapping $B\in\mathcal{L}(E_1, E_2, \dots, E_m;G)$ and $u\in \mathcal{I}(G,F)$ such that $A=u\circ B$.
	
	(c) \emph{Composition Polynomial Ideal}: A polynomial $P\in \mathcal{P}(^mE, F)$ belongs to $\mathcal{I}\circ \mathcal{P}$ if there exist a Banach space $G$ and $Q\in \mathcal{P}(^mG, F)$ and an operator $u\in \mathcal{I}(E,G)$ such that $P=u\circ Q$. In this case we write $P\in \mathcal{I}\circ \mathcal{P}(^mE, F)$.
\end{definition}
The following generalizations for $\mathcal{I}$-approximation property are quoted from \cite{Sonia}.
\begin{theorem}\label{I-AP characterization}
	For a Banach space $E$, the following are equivalent:\\
	(i) $E$ has the $\mathcal{I}$- approximation property.\\ (ii)
	$\overline{\mathcal{I}(E,F)}^{\tau_c}= \mathcal{L}(E,F)$ for every Banach space $F$.\\
	(iii) $\overline{\mathcal{I}(F,E)}^{\tau_c}= \mathcal{L}(F,E)$ for every Banach space $F$.
	
\end{theorem}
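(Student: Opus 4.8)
The plan is to follow the classical template of Grothendieck's characterization of the approximation property, exactly as it is recorded for the compact approximation property in Theorem \ref{cap char}, replacing the ideal $\mathcal{K}$ of compact operators throughout by the abstract operator ideal $\mathcal{I}$. The only structural fact about $\mathcal{I}$ that will be used is the defining \emph{ideal property}: if $S\in\mathcal{I}(E,E)$ and $T\in\mathcal{L}(E,F)$ then $T\circ S\in\mathcal{I}(E,F)$, and if $R\in\mathcal{L}(F,E)$ then $S\circ R\in\mathcal{I}(F,E)$. I would establish the three equivalences through the cycle $(i)\Rightarrow(ii)\Rightarrow(i)$ together with $(i)\Rightarrow(iii)\Rightarrow(i)$, the two reverse implications being immediate specializations.

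For $(i)\Rightarrow(ii)$: fix a Banach space $F$ and $T\in\mathcal{L}(E,F)$; one must show that $T$ lies in the $\tau_c$-closure of $\mathcal{I}(E,F)$ inside $\mathcal{L}(E,F)$. Given a compact set $K\subseteq E$ and $\varepsilon>0$, use $(i)$ to choose $S\in\mathcal{I}(E,E)$ with $\sup_{x\in K}\|Sx-x\|<\varepsilon/(1+\|T\|)$; then $T\circ S\in\mathcal{I}(E,F)$ by the ideal property and $\sup_{x\in K}\|(T\circ S)x-Tx\|\le\|T\|\,\varepsilon/(1+\|T\|)<\varepsilon$, so $T$ is a $\tau_c$-limit of members of $\mathcal{I}(E,F)$. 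Hence $\mathcal{L}(E,F)\subseteq\overline{\mathcal{I}(E,F)}^{\tau_c}$, and the reverse inclusion is trivial since the closure is taken inside $\mathcal{L}(E,F)$. The argument for $(i)\Rightarrow(iii)$ is symmetric: for $T\in\mathcal{L}(F,E)$ and $K\subseteq F$ compact, apply $(i)$ to the compact set $T(K)\subseteq E$ to obtain $S\in\mathcal{I}(E,E)$ with $\sup_{y\in T(K)}\|Sy-y\|<\varepsilon$; then $S\circ T\in\mathcal{I}(F,E)$ and $\sup_{x\in K}\|(S\circ T)x-Tx\|<\varepsilon$.

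For $(ii)\Rightarrow(i)$ and $(iii)\Rightarrow(i)$: simply take $F=E$. Then $(ii)$ (respectively $(iii)$) gives $\overline{\mathcal{I}(E,E)}^{\tau_c}=\mathcal{L}(E,E)$, so in particular $I_E\in\overline{\mathcal{I}(E,E)}^{\tau_c}$; unravelling the definition of $\tau_c$ as the topology of uniform convergence on compact sets, this says precisely that $I_E$ is uniformly approximable on every compact subset of $E$ by members of $\mathcal{I}(E,E)$, i.e. $E$ has the $\mathcal{I}$-approximation property.

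There is no serious obstacle here; the entire content is the ideal property of $\mathcal{I}$ together with the elementary observation that $T(K)$ is compact whenever $K$ is. The one point that warrants a line of care is the convention that each $\tau_c$-closure is formed inside the corresponding space $\mathcal{L}(\cdot,\cdot)$ of bounded operators (a pointwise limit of bounded operators need not a priori be bounded), so that the equalities asserted in $(ii)$ and $(iii)$ are meaningful as two-sided inclusions; with that convention fixed, the proof is identical in form to that of Theorem \ref{cap char}.
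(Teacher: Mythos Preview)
Your proof is correct and follows the classical Grothendieck template exactly as you describe. Note, however, that the paper does not actually supply its own proof of this statement: Theorem~\ref{I-AP characterization} is merely quoted from \cite{Sonia} (see the sentence ``The following generalizations for $\mathcal{I}$-approximation property are quoted from \cite{Sonia}'' immediately preceding it), so there is no in-paper argument to compare against. Your argument is precisely the standard one that appears in the literature for this result.
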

Observe that the above theorem is a generalization of Theorem \ref{cap char}. 
\begin{proposition}\label{comple I-AP} Let $\mathcal{I}$ be an operator ideal and $E$ be a Banach space with $\mathcal{I}$-AP. Then every complemented subspace of $E$ has $\mathcal{I}$-AP. 
\end{proposition}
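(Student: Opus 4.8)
The plan is to argue straight from the definition of the $\mathcal{I}$-approximation property, using the one structural feature of an operator ideal that matters here: stability under composition with arbitrary bounded operators on either side. Let $M$ be a complemented subspace of $E$, and fix a bounded linear projection $\pi\colon E\to M$ onto $M$; write $\iota\colon M\to E$ for the inclusion, so that $\pi\circ\iota=I_M$. If $\pi=0$ then $M=\{0\}$ and there is nothing to prove, so I may assume $\|\pi\|>0$.

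Now I would take a compact set $K\subset M$ and $\epsilon>0$. Since $\iota$ is an isometric embedding, $K$ is also a compact subset of $E$, so the hypothesis that $E$ has the $\mathcal{I}$-approximation property yields $T\in\mathcal{I}(E,E)$ with $\sup_{x\in K}\|Tx-x\|<\epsilon/\|\pi\|$. Set $S:=\pi\circ T\circ\iota$. Because $\mathcal{I}$ is an operator ideal and $T\in\mathcal{I}(E,E)$, this composition again lies in $\mathcal{I}$, i.e. $S\in\mathcal{I}(M,M)$. For $x\in K\subset M$ one has $\iota x=x$ and $\pi x=x$, hence $Sx-x=\pi(Tx)-\pi(x)=\pi(Tx-x)$, and therefore $\|Sx-x\|\le\|\pi\|\,\|Tx-x\|<\epsilon$ for every $x\in K$. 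Thus the identity on $M$ is uniformly approximated on compact sets by members of $\mathcal{I}(M,M)$, which is exactly the $\mathcal{I}$-approximation property for $M$.

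Alternatively, one can phrase the same idea through Theorem \ref{I-AP characterization}: given a Banach space $F$ and $R\in\mathcal{L}(F,M)$, one approximates $\iota\circ R\in\mathcal{L}(F,E)$ in $\tau_c$ by a net $(V_\alpha)\subset\mathcal{I}(F,E)$, and then $\pi\circ V_\alpha\in\mathcal{I}(F,M)$ converges in $\tau_c$ to $\pi\circ\iota\circ R=R$, since left composition with the fixed operator $\pi$ is $\tau_c$-continuous; hence $\overline{\mathcal{I}(F,M)}^{\tau_c}=\mathcal{L}(F,M)$.

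There is no real obstacle in this proof; the only points to be careful about are the normalization by $\|\pi\|$ (which forces one to pick the approximant of $I_E$ on $K$ with tolerance $\epsilon/\|\pi\|$ rather than $\epsilon$) and, in the alternative route, recording that composing on the left with a fixed bounded operator is continuous for the topology of uniform convergence on compact sets and preserves membership in the ideal $\mathcal{I}$.
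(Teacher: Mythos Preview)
Your argument is correct and is the standard one. Note, however, that the paper does not supply its own proof of this proposition: it is quoted from \cite{Sonia} without proof, so there is no in-paper argument to compare against. Your direct approach---sandwich an $\mathcal{I}(E,E)$-approximant of $I_E$ between the inclusion $\iota$ and the projection $\pi$, using the ideal property to land in $\mathcal{I}(M,M)$---is exactly how the result is proved in the source, and your alternative via Theorem~\ref{I-AP characterization} is an equivalent rephrasing.
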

The following characterization of $\mathcal{I}$-AP in terms of $m$-homogeneous polynomials is proved in \cite{Sonia}.
\begin{theorem}\label{I-AP polynomial ideal char}
	Let $E$ be a Banach space and $\mathcal{I}$ be an operator ideal such that $\mathcal{L}[I]\subset \mathcal{I}\circ \mathcal{L}$. Then  $E$ has $\mathcal{I}$-AP if and only if $\mathcal{P}(^mE, F)\subset \overline{\mathcal{I}\circ \mathcal{P}(^nE, F)}$ for every $n\in \mathbb{N}$ and Banach space $F$.
\end{theorem}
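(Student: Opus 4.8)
The plan is to prove the two implications separately. The reverse implication reduces to the linear case: take $n=1$ in the hypothesis. Then $\mathcal{P}(^1E,F)=\mathcal{L}(E,F)$, and any $R\in\mathcal{I}\circ\mathcal{P}(^1E,F)$ factors as $R=u\circ Q$ with $Q\in\mathcal{P}(^1E,G)=\mathcal{L}(E,G)$ and $u\in\mathcal{I}(G,F)$, whence $R\in\mathcal{I}(E,F)$ by the ideal property of $\mathcal{I}$. Thus $\mathcal{I}\circ\mathcal{P}(^1E,F)\subset\mathcal{I}(E,F)$, and so $\mathcal{L}(E,F)\subset\overline{\mathcal{I}\circ\mathcal{P}(^1E,F)}^{\tau_c}\subset\overline{\mathcal{I}(E,F)}^{\tau_c}$ for every Banach space $F$. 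By Theorem \ref{I-AP characterization} $((ii)\Rightarrow(i))$ this means $E$ has $\mathcal{I}$-AP.

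For the forward implication, assume $E$ has $\mathcal{I}$-AP, fix $n\in\mathbb{N}$, a Banach space $F$, and $P\in\mathcal{P}(^nE,F)$; the target is $P\in\overline{\mathcal{I}\circ\mathcal{P}(^nE,F)}^{\tau_c}$. By the definition of $\mathcal{I}$-AP there is a net $(u_\alpha)_{\alpha\in\Lambda}\subset\mathcal{I}(E,E)$ with $u_\alpha\xrightarrow{\tau_c}I_E$. Let $\check P\in\mathcal{L}(^nE;F)$ be the symmetric $n$-linear mapping associated with $P$, so that $P(x)=\check P(x,\dots,x)$, and put $A_\alpha=\check P\circ(u_\alpha,\dots,u_\alpha)\in\mathcal{L}(^nE;F)$. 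Each $A_\alpha$ is of type $\mathcal{L}[\mathcal{I}]$ (in Definition \ref{polynomial ideals}(a) take all $G_j=E$, $u_j=u_\alpha\in\mathcal{I}(E,E)$ and $B=\check P$), so the hypothesis $\mathcal{L}[\mathcal{I}]\subset\mathcal{I}\circ\mathcal{L}$ supplies a factorization $A_\alpha=v_\alpha\circ B_\alpha$ with $B_\alpha\in\mathcal{L}(^nE;G_\alpha)$ and $v_\alpha\in\mathcal{I}(G_\alpha,F)$. Restricting to the diagonal yields $(P\circ u_\alpha)(x)=\check P(u_\alpha x,\dots,u_\alpha x)=v_\alpha\big(\widehat{B_\alpha}(x)\big)$ with $\widehat{B_\alpha}\in\mathcal{P}(^nE;G_\alpha)$, i.e.\ $P\circ u_\alpha\in\mathcal{I}\circ\mathcal{P}(^nE,F)$ for every $\alpha$.

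To conclude, I would show $P\circ u_\alpha\xrightarrow{\tau_c}P$, using the standard fact that a continuous $n$-homogeneous polynomial is Lipschitz on bounded sets: telescoping through $\check P$ gives $\|P(x)-P(y)\|\le n\|\check P\|\,M^{n-1}\|x-y\|$ whenever $\|x\|,\|y\|\le M$. Given a compact $K\subset E$, for $\alpha$ large one has $\sup_{x\in K}\|u_\alpha x-x\|\le 1$, so $K\cup u_\alpha(K)\subset MB_E$ for a suitable $M>0$, and then $\sup_{x\in K}\|P(u_\alpha x)-P(x)\|\le n\|\check P\|M^{n-1}\sup_{x\in K}\|u_\alpha x-x\|\to 0$; hence $P\in\overline{\mathcal{I}\circ\mathcal{P}(^nE,F)}^{\tau_c}$. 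I expect the crux to be the middle step rather than this convergence: one cannot feed $P$ itself into the hypothesis, because $P\circ u_\alpha$ is a priori only a pre-composition of a polynomial by an $\mathcal{I}$-operator, whereas $\mathcal{I}\circ\mathcal{P}$ is the composition ideal built by post-composition; the hypothesis must therefore be invoked for the symmetric $n$-linear map $A_\alpha$ and only afterwards transferred back to polynomials by restriction to the diagonal. The $\tau_c$-convergence, while it genuinely uses the polynomial (not merely continuous) nature of $P$, is otherwise routine.
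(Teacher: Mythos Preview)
The paper does not supply its own proof of this theorem: it is quoted verbatim from Berrios and Botelho \cite{Sonia} (introduced with ``The following characterization \ldots\ is proved in \cite{Sonia}'') and no argument is reproduced. So there is no in-paper proof to compare your attempt against.

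That said, your argument is correct and is precisely the intended one. The reverse implication is immediate once one observes that $\mathcal{I}\circ\mathcal{P}(^1E,F)\subset\mathcal{I}(E,F)$ by the ideal property, so the $n=1$ case feeds directly into Theorem~\ref{I-AP characterization}. For the forward implication your key observation is exactly right: $P\circ u_\alpha$ is a priori only a \emph{pre}-composition by an $\mathcal{I}$-operator, so one must pass to the associated symmetric $n$-linear map, invoke $\mathcal{L}[\mathcal{I}]\subset\mathcal{I}\circ\mathcal{L}$ to turn this into a \emph{post}-composition $v_\alpha\circ B_\alpha$, and only then restrict to the diagonal to land back in $\mathcal{I}\circ\mathcal{P}(^nE,F)$. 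The $\tau_c$-convergence $P\circ u_\alpha\to P$ via the telescoping Lipschitz estimate for $\check P$ is standard and your sketch is complete. One minor remark: the statement as printed in the paper has an apparent typo ($m$ versus $n$ in the superscripts); your reading, with a single index $n$, is the intended one and matches both your proof and the usage in the paper's Theorem~4.7 ($(a)\Leftrightarrow(b)$).
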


Let us define
$\mathcal{I}\circ \mathcal{H}_w(U,F)=\{f\in \mathcal{H}(U,F): f=S\circ g  ~~\textrm{for some Banach space }~G, ~S\in \mathcal{I}(G,F)~\textrm{and}~ g\in \mathcal{H}_w(U, G)\}.$
Generalizing Proposition \ref{Interplay of holo and linear} for an arbitrary operator ideal $\mathcal{I}$, we have 
\begin{theorem}\label{interplay wrt arb OI}
	Let $f\in \mathcal{H}_w(U,F)$ and $\mathcal{I}$ be an operator ideal. Then $f\in \mathcal{I}\circ \mathcal{H}_w(U,F)$ if and only if $T_f\in \mathcal{I}(\mathcal{G}_w(U), F)$.
\end{theorem}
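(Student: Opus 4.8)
The plan is to reduce both implications to the Linearization Theorem (Theorem \ref{Linearization Theorem}) together with the defining property of an operator ideal, namely that $\mathcal{I}$ absorbs composition with arbitrary bounded linear operators on either side. Each direction is short once the right factorization is written down, so there is essentially no obstacle beyond keeping track of the uniqueness clause in the linearization.

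For the ``if'' direction, assume $T_f\in\mathcal{I}(\mathcal{G}_w(U),F)$. I would simply take $G=\mathcal{G}_w(U)$, $S=T_f$ and $g=\Delta_w$. By Theorem \ref{Linearization Theorem} we have $\Delta_w\in\mathcal{H}_w(U,\mathcal{G}_w(U))$ and $T_f\circ\Delta_w=f$, so $f=S\circ g$ with $S\in\mathcal{I}(G,F)$ and $g\in\mathcal{H}_w(U,G)$; that is, $f\in\mathcal{I}\circ\mathcal{H}_w(U,F)$.

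For the ``only if'' direction, write $f=S\circ g$ with $G$ a Banach space, $S\in\mathcal{I}(G,F)$ and $g\in\mathcal{H}_w(U,G)$. Applying the Linearization Theorem to $g$ yields $T_g\in\mathcal{L}(\mathcal{G}_w(U),G)$ with $T_g\circ\Delta_w=g$. Then $S\circ T_g\in\mathcal{L}(\mathcal{G}_w(U),F)$ and $(S\circ T_g)\circ\Delta_w=S\circ(T_g\circ\Delta_w)=S\circ g=f$, so by the uniqueness part of Theorem \ref{Linearization Theorem} we get $T_f=S\circ T_g$. Since $S\in\mathcal{I}(G,F)$ and $T_g$ is bounded linear, the ideal property of $\mathcal{I}$ gives $T_f=S\circ T_g\in\mathcal{I}(\mathcal{G}_w(U),F)$, as required.

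The main point to be careful about is that in the ``only if'' direction one must invoke uniqueness in the linearization to identify $T_f$ with $S\circ T_g$, rather than merely producing \emph{some} operator that linearizes $f$; after that the ideal axioms close the argument. One may also remark in passing that $\mathcal{I}\circ\mathcal{H}_w(U,F)\subset\mathcal{H}_w(U,F)$ automatically, since $\|S\circ g\|_w\le\|S\|\,\|g\|_w<\infty$, so the standing hypothesis $f\in\mathcal{H}_w(U,F)$ is genuinely needed only in the converse direction, in order to make sense of $T_f$.
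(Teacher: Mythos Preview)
Your proof is correct and follows essentially the same approach as the paper: both directions rest on the factorization $f=T_f\circ\Delta_w$ from the Linearization Theorem together with the ideal property. The only cosmetic difference is that, to identify $T_f$ with $S\circ T_g$ in the ``only if'' direction, the paper checks equality on the dense set $\{\delta_x:x\in U\}$ whereas you invoke the uniqueness clause of Theorem~\ref{Linearization Theorem} directly; these are equivalent.
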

\begin{proof}
	Suppose $f\in \mathcal{I}\circ \mathcal{H}_w(U,F)$. Then there exist a Banach space $G$, an operator $S\in \mathcal{I}(G, F)$ and a map $g\in \mathcal{H}_w(U,G)$ such that $f=S\circ g$.
	Note that $$T_f(\delta_x)=f(x)=S\circ g(x)=S \circ T_g(\delta_x).$$
	Since $span\{\Delta_w(x)=\delta_x:x\in U\}$ is dense in $\mathcal{G}_w(U)$, cf.\cite[Lemma 7]{Beltran}, ~$T_f=S \circ T_g\in \mathcal{I}(G_w(U), F)$.
	
	Conversely, assume that $T_f\in \mathcal{I}(\mathcal{G}_w(U), F)$ for  $f\in \mathcal{H}_w(U, F)$. Since $\Delta_w\in \mathcal{H}(U, \mathcal{G}_w(U))$ and $f=T_f\circ \Delta_w$, $f\in \mathcal{I}\circ \mathcal{H}_w(U, F)$.
\end{proof}

Next, we show that each mapping in $\mathcal{I}\circ \mathcal{P}$ admits a factorization in terms of weighted holomorphic mappings. 
\begin{proposition}\label{polynomials and holomorphic ideal}
	Let $\mathcal{I}$ be an operator ideal and $P:E\rightarrow F$ be a continuous polynomial such that $P=P_0+P_1+P_2+\cdots+P_n$ with $P_l\in \mathcal{I}\circ \mathcal{P}(^lE,F)$ for each $0\leq l\leq n$. Then $P\in \mathcal{I}\circ \mathcal{H}_w(U,F)$.
\end{proposition}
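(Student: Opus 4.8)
The plan is to glue the factorizations of the homogeneous components $P_0,\dots,P_n$ into a single factorization of $P$ through a finite direct sum, keeping the operator-ideal factor as the outer map.

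First I would unwind the hypothesis: for each $l\in\{0,1,\dots,n\}$, since $P_l\in\mathcal I\circ\mathcal P(^lE,F)$ there exist a Banach space $G_l$, a homogeneous polynomial $Q_l\in\mathcal P(^lE,G_l)$ and an operator $u_l\in\mathcal I(G_l,F)$ with $P_l=u_l\circ Q_l$. Next I would take $G$ to be the finite direct sum $G_0\oplus\cdots\oplus G_n$ equipped with the supremum norm, with canonical inclusions $\iota_l\colon G_l\to G$ and coordinate projections $\pi_l\colon G\to G_l$ (each of norm at most $1$), and define $S=\sum_{l=0}^n u_l\circ\pi_l\colon G\to F$ and $g=\sum_{l=0}^n \iota_l\circ Q_l\colon U\to G$, i.e.\ $g(x)=(Q_0(x),\dots,Q_n(x))$. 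Since $\pi_l\circ\iota_j$ is the identity of $G_l$ when $j=l$ and is $0$ otherwise, one gets $(S\circ g)(x)=\sum_l u_l(Q_l(x))=\sum_l P_l(x)=P(x)$ for every $x\in U$, so the whole problem reduces to checking that $S\in\mathcal I(G,F)$ and that $g\in\mathcal H_w(U,G)$.

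For the first, each $u_l\circ\pi_l$ lies in $\mathcal I(G,F)$ by the ideal property (absorption of composition with a bounded operator), and $\mathcal I(G,F)$ is a linear subspace of $\mathcal L(G,F)$, so the finite sum $S$ is again in $\mathcal I(G,F)$. For the second, each $Q_l$ is a continuous homogeneous polynomial on $E$, hence entire and in particular holomorphic on $U$; invoking the standing hypothesis $\mathcal P(E)\subset\mathcal H_w(U)$ together with Proposition~\ref{poly containment in weighted holo}, I get $\mathcal P(^lE,G_l)\subset\mathcal H_w(U,G_l)$, so $Q_l\in\mathcal H_w(U,G_l)$ and hence $\iota_l\circ Q_l\in\mathcal H_w(U,G)$ with $\|\iota_l\circ Q_l\|_w=\|Q_l\|_w<\infty$; as $\mathcal H_w(U,G)$ is a vector space, $g=\sum_l\iota_l\circ Q_l\in\mathcal H_w(U,G)$. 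This gives $P=S\circ g\in\mathcal I\circ\mathcal H_w(U,F)$.

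The construction itself is largely bookkeeping. The step I expect to be the main (indeed essentially the only) obstacle is the verification that $g$ lands in the \emph{weighted} space $\mathcal H_w(U,G)$ and not merely in $\mathcal H(U,G)$: this is exactly where the hypothesis forcing polynomials on $E$ to be $w$-bounded on $U$ enters, through Proposition~\ref{poly containment in weighted holo}; for a general weight on an unbounded domain the factorizing map $g$ need not be $w$-bounded and the argument would break. The remaining ingredients — stability of $\mathcal I$ under composition with bounded maps and under finite sums, and the relations $\pi_l\circ\iota_j=\delta_{lj}\,\mathrm{id}$ — are routine.
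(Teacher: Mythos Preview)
Your proposal is correct and follows essentially the same route as the paper: factor each $P_l=u_l\circ Q_l$, pass to the product $G=G_0\times\cdots\times G_n$, set $g(x)=(Q_0(x),\dots,Q_n(x))$ and $S(y_0,\dots,y_n)=\sum_l u_l(y_l)$, and conclude $P=S\circ g$ with $S\in\mathcal I(G,F)$ and $g\in\mathcal P(E,G)\subset\mathcal H_w(U,G)$. You are also right that the only substantive step is the inclusion $\mathcal P(E,G)\subset\mathcal H_w(U,G)$, which the paper invokes tacitly via the ambient hypothesis that $\mathcal H_w(U,\cdot)$ contains all polynomials.
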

\begin{proof}
	Though the proof of this result is analogous to the  proof of \cite[Theorem 2.4 ]{ArBoPeRu}, we outline the same for the sake of convenience.
	
	Since $P=P_0+P_1+P_2+\cdots+P_n$ with $P_l\in \mathcal{I}\circ \mathcal{P}(^lE,F)$ for each $0\leq l\leq n$, there are Banach spaces $G_0$, $G_1$, $G_2$,\dots, $G_n$,  $Q_l\in \mathcal{P}(^lE,G_l)$ and $A_l\in \mathcal{I}(G_l,F)$ such that $P_l=A_l\circ Q_l$ for each $0\leq l\leq n$.
	
	Define $G=G_0\times G_1\times G_2\times\cdots\times G_n$, $Q:E\rightarrow G$ and $A:G\rightarrow F$ by
	$Q(x)=(Q_0(x), Q_1(x),\dots,Q_n(x))$
	and $A((y_0, y_1, y_2,\dots, y_n))=A_0(y_0)+A_1(y_1)+\cdots +A_n(y_n).$ Then $A\in \mathcal{I}(G, F)$ and $Q\in \mathcal{P}(E,G)$.
	
	Note that $A\circ Q(x)=A_0\circ Q_0(x) +A_1\circ Q_1(x)+A_2\circ Q_2(x)+\cdots+ A_n\circ Q_n(x)=P_0(x) +P_1(x)+P_2(x)+\cdots+ P_n(x)=P(x)$ for all $x\in E$, where $Q\in \mathcal{P}(E,G)\subset \mathcal{H}_w(U,G)$. Thus $P\in \mathcal{I}\circ \mathcal{H}_w(U,F)$.
\end{proof}
Using the above proposition, we obtain our main result. 
\begin{theorem}
	Let $w$ be a radial weight on a balanced open subset $U$ of a Banach space $E$ such that $\mathcal{H}_w(U, F)$ contains all polynomials and $\mathcal{I}$ be an operator ideal such that $\mathcal{L}[I]\subset \mathcal{I}\circ \mathcal{L}$. Then the following assertions are equivalent:
	
	(a) $E$ has $\mathcal{I}-$AP.
	
	(b) $\mathcal{P}(^mE, F)= \overline{\mathcal{I}\circ \mathcal{P}(^mE, F)}^{\tau_c}$ for each $m\in \mathbb{N}$ and each Banach space $F$.
	
	(c) $\mathcal{H}_w(U,F)=\overline{\mathcal{I}\circ \mathcal{H}_w(U, F)}^{\tau_{\mathcal{M}}}$ for each Banach space $F$.
	
	(d) $\mathcal{G}_w(U)$ has the $\mathcal{I}$-AP.
	
	(e) $I_U\in  \overline{\mathcal{I}\circ \mathcal{H}_w(U, F)}^{\tau_{\mathcal{M}}}$ for each Banach space $F$.
\end{theorem}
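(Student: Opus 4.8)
The plan is to prove the cycle $(a)\Leftrightarrow(b)$, $(b)\Rightarrow(c)\Leftrightarrow(d)\Rightarrow(a)$, together with $(c)\Rightarrow(e)\Rightarrow(a)$, which makes all five statements equivalent. Two of these links are essentially formal. The equivalence $(a)\Leftrightarrow(b)$ is just a reformulation of Theorem~\ref{I-AP polynomial ideal char} (valid because of the standing hypothesis $\mathcal{L}[I]\subset\mathcal{I}\circ\mathcal{L}$), the inclusion $\overline{\mathcal{I}\circ\mathcal{P}(^mE,F)}^{\tau_c}\subseteq\mathcal{P}(^mE,F)$ being automatic. For $(c)\Leftrightarrow(d)$ I would use that, by Theorem~\ref{interplay wrt arb OI}, the isometry $\Psi$ of Theorem~\ref{Linearization Theorem} maps $\mathcal{I}\circ\mathcal{H}_w(U,F)$ onto $\mathcal{I}(\mathcal{G}_w(U),F)$; since by Theorem~\ref{topological isomorphism} $\Psi$ is a topological isomorphism from $(\mathcal{H}_w(U,F),\tau_{\mathcal{M}})$ onto $(\mathcal{L}(\mathcal{G}_w(U),F),\tau_c)$, statement $(c)$ is equivalent to $\overline{\mathcal{I}(\mathcal{G}_w(U),F)}^{\tau_c}=\mathcal{L}(\mathcal{G}_w(U),F)$ for every Banach space $F$, i.e.\ to $(d)$ by Theorem~\ref{I-AP characterization}.

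For the one-way ``soft'' implications: $(d)\Rightarrow(a)$ holds because $E$ is isomorphic to a complemented subspace of $\mathcal{G}_w(U)$. Indeed, taking $S=d^1\Delta_w(0)\in\mathcal{L}(E,\mathcal{G}_w(U))$ and $T=T_{I_U}\in\mathcal{L}(\mathcal{G}_w(U),E)$ as in the proof of Theorem~\ref{I_U cap char} one has $T\circ S=I_E$, so $S\circ T$ is a projection of $\mathcal{G}_w(U)$ onto $S(E)\cong E$, and Proposition~\ref{comple I-AP} carries $\mathcal{I}$-AP from $\mathcal{G}_w(U)$ down to $E$. Next, $(c)\Rightarrow(e)$ is immediate, since $I_U\in\mathcal{P}(^1E,E)\subset\mathcal{H}_w(U,E)$ by hypothesis and hence belongs to $\mathcal{H}_w(U,E)=\overline{\mathcal{I}\circ\mathcal{H}_w(U,E)}^{\tau_{\mathcal{M}}}$. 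Finally, $(e)\Rightarrow(a)$ follows the proof of the implication $(b)\Rightarrow(a)$ of Theorem~\ref{I_U cap char} verbatim, with $\mathcal{K}$ replaced by $\mathcal{I}$: from a net $f_\alpha\xrightarrow{\tau_{\mathcal{M}}}I_U$ in $\mathcal{I}\circ\mathcal{H}_w(U,E)$ one gets $T_{f_\alpha}\in\mathcal{I}(\mathcal{G}_w(U),E)$ (Theorem~\ref{interplay wrt arb OI}) and $T_{f_\alpha}\xrightarrow{\tau_c}T_{I_U}$ (Theorem~\ref{topological isomorphism}), whence $T_{f_\alpha}\circ S\xrightarrow{\tau_c}T_{I_U}\circ S=I_E$ with $T_{f_\alpha}\circ S\in\mathcal{I}(E,E)$; this is precisely $\mathcal{I}$-AP of $E$.

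The substantial implication is $(b)\Rightarrow(c)$, and I expect the \emph{main obstacle} to lie there, in transferring approximation from $\tau_c$ to the finer topology $\tau_{\mathcal{M}}$. The first step is to show that polynomials are $\tau_{\mathcal{M}}$-dense in $\mathcal{H}_w(U,F)$: for $f\in\mathcal{H}_w(U,F)$ the Ces\`{a}ro means $C_Nf=\sum_{m=0}^N(1-\tfrac{m}{N+1})P_mf(0)$ admit, since $U$ is balanced, the Fej\'{e}r-kernel representation $C_Nf(x)=\tfrac1{2\pi}\int_{-\pi}^{\pi}f(e^{i\theta}x)F_N(\theta)\,d\theta$, so $\|C_Nf(x)\|\le\sup_{\theta}\|f(e^{i\theta}x)\|$, and using that $w$ is \emph{radial} this yields $\|C_Nf\|_w\le\|f\|_w$, while $C_Nf\to f$ uniformly on compact subsets of $U$. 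Hence $(C_Nf)_N$ is a $\|\cdot\|_w$-bounded net converging to $f$ in $\tau_c$, and therefore in $\tau_{\mathcal{M}}$, because $\tau_c$ and $\tau_{\mathcal{M}}$ coincide on $\|\cdot\|_w$-bounded sets by Theorems~\ref{topological isomorphism} and~\ref{topo bounded iso}. As $\overline{\mathcal{I}\circ\mathcal{H}_w(U,F)}^{\tau_{\mathcal{M}}}$ is $\tau_{\mathcal{M}}$-closed, it then suffices to place every polynomial $P=P_0+\cdots+P_n$ (with $P_l\in\mathcal{P}(^lE,F)$) into it: hypothesis $(b)$ approximates each $P_l$ in $\tau_c$ by members of $\mathcal{I}\circ\mathcal{P}(^lE,F)$, which assemble via Proposition~\ref{polynomials and holomorphic ideal} into elements of $\mathcal{I}\circ\mathcal{H}_w(U,F)$, and applying the Ces\`{a}ro-mean construction to their factorizations (note $C_N(S\circ g)=S\circ(C_Ng)$, which keeps the approximants inside $\mathcal{I}\circ\mathcal{H}_w(U,F)$ and $\|\cdot\|_w$-bounded) converts the $\tau_c$-approximation into a $\tau_{\mathcal{M}}$-approximation. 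This last conversion is the delicate point, and is carried out as in the proof of Theorem~\ref{cap holo char} in \cite{DB2}; equivalently, one may argue $(a)\Rightarrow(d)$ directly, using that the norm-one projections $f\mapsto P_mf(0)$ on $\mathcal{H}_w(U)$ dualize to exhibit each $m$-homogeneous part of $\mathcal{G}_w(U)$ as (a space isomorphic to) the predual of the space of $m$-homogeneous polynomials on $E$, which inherits $\mathcal{I}$-AP from $E$ by the polynomial-ideal machinery of \cite{Sonia} (again via $\mathcal{L}[I]\subset\mathcal{I}\circ\mathcal{L}$), whereupon the uniformly bounded pre-adjoints of the $C_N$ — strongly convergent to $I_{\mathcal{G}_w(U)}$ and with ranges in the degree-$\le N$ part — force $\mathcal{G}_w(U)$ to have $\mathcal{I}$-AP. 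The radiality of $w$ (for the estimate $\|C_N\|_w\le1$) and the hypothesis on $\mathcal{I}$ are exactly the two ingredients that make this transfer work.
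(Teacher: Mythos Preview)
Your logical skeleton — $(a)\Leftrightarrow(b)$, $(b)\Rightarrow(c)$, $(c)\Rightarrow(d)\Rightarrow(a)$, $(c)\Rightarrow(e)\Rightarrow(a)$ — is exactly the one the paper uses, and your arguments for $(a)\Leftrightarrow(b)$, $(c)\Rightarrow(d)$, $(d)\Rightarrow(a)$, $(c)\Rightarrow(e)$ and $(e)\Rightarrow(a)$ match the paper's essentially verbatim (the paper does not prove $(d)\Rightarrow(c)$ separately, but your direct use of the isomorphism $\Psi$ for $(c)\Leftrightarrow(d)$ is a harmless shortcut). The only real divergence is in $(b)\Rightarrow(c)$, where you overcomplicate matters. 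The paper simply cites the $\tau_{\mathcal{M}}$-density of $\mathcal{P}(E,F)$ in $\mathcal{H}_w(U,F)$ from \cite{DB1} (your Ces\`aro/Fej\'er argument is essentially a reproof of that), fixes a $\tau_{\mathcal{M}}$-seminorm $p$, approximates $f$ by a polynomial $P=\sum_{m\le l}P_m$, and then applies $(b)$ \emph{directly} to get $Q_m\in\mathcal{I}\circ\mathcal{P}(^mE,F)$ with $p(P_m-Q_m)$ small; Proposition~\ref{polynomials and holomorphic ideal} assembles the $Q_m$ into an element of $\mathcal{I}\circ\mathcal{H}_w(U,F)$. No ``$\tau_c\to\tau_{\mathcal{M}}$ conversion'' is needed, because on $\mathcal{P}(^mE,F)$ every $\tau_{\mathcal{M}}$-seminorm is already $\tau_c$-continuous: using homogeneity and the hypothesis $\mathcal{P}(^mE)\subset\mathcal{H}_w(U)$ (which gives $\sup_{x\in U}w(x)\|x\|^m<\infty$), one rescales the finite sets $A_j$ into a null sequence in $E$, so each $p_{\bar\alpha,\bar A}$ restricted to $\mathcal{P}(^mE,F)$ is dominated by a sup over a compact set. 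This is the point you identify as ``delicate'', but your proposed fix — applying $C_N$ to the factorizations $Q=S\circ g$ to force $\|\cdot\|_w$-boundedness — does not work as written: the $Q_m$ are already $m$-homogeneous polynomials, so $C_NQ_m=(1-\tfrac{m}{N+1})Q_m$, which neither bounds $\|Q_m\|_w$ nor improves the mode of convergence. Your fallback to \cite{DB2} and your alternative $(a)\Rightarrow(d)$ sketch are fine as pointers, but the direct route the paper takes is both shorter and avoids the detour entirely.
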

\begin{proof}
	$(a)\Leftrightarrow (b)$ follows from \cite[Theorem 4.2$(a) \Leftrightarrow (f)$]{Sonia}.
	
	$(b)\implies (c)$: 
	Let $p$ be a $\tau_{\mathcal{M}}$- continuous semi-norm on $\mathcal{H}_w(U,F)$ and $f\in \mathcal{H}_w(U,F)$. Since $\mathcal{P}(E,F)$ is $\tau_{\mathcal{M}}$-dense in $\mathcal{H}_w(U,F)$, cf. \cite[Proposition 4.6]{DB1}, there exists $P\in \mathcal{P}(E,F)$ such that
	\begin{equation}\label{fP}
		p(f-P)<\frac{\epsilon}{2}.
	\end{equation}
	
	Let $P=P_0+P_1+\cdots+P_l$, $P_m\in\mathcal{P}(^mE,F)$ for each $0\leq m\leq l$. Then by (b), there exists $Q_m\in \mathcal{I}\circ \mathcal{P}(^mE,F)$  such that
	\begin{equation}\label{QP}
		p(Q_m-P_m)<\frac{\epsilon}{2(m+1)}
	\end{equation}
	for each $0\leq m\leq l$.
	
	Define $Q=Q_0+Q_1+\cdots+Q_l$. By Proposition \ref {polynomials and holomorphic ideal}, $Q\in  \mathcal{I}\circ \mathcal{H}_w(U,F)$  and
	\begin{equation}\label {PQ}
		p(P-Q)= p(\sum_{m=0}^{l}P_m-\sum_{m=0}^{l}Q_m)<\frac{\epsilon}{2}
	\end{equation}
	by (\ref{QP}).
	Hence by (\ref{fP}) and (\ref{PQ}), we have
	$$p(Q-f)<\epsilon.$$
	Thus (c) follows.
	
	$(c)\implies (d)$: Since $\Delta_w\in \mathcal{H}_w(U, \mathcal{G}_w(U))$, $\Delta_w\in  \overline{\mathcal{I}\circ \mathcal{H}_w(U, \mathcal{G}_w(U))}^{\tau_{\mathcal{M}}}$ by taking $F=\mathcal{G}_w(U)$ in $(c)$. Thus there exists a net
	$(f_{\alpha})\subset \mathcal{I}\circ \mathcal{H}_w(U, \mathcal{G}_w(U))$ such that $f_{\alpha}\xrightarrow{\tau_{\mathcal{M}}}\Delta_w$. By Theorem \ref{topological isomorphism}, we get
	$$T_{f_{\alpha}}\xrightarrow{\tau_{c}}T_{\Delta_w}.$$
	Note that $T_{\Delta_w}=I_{\mathcal{G}_w(U)}$ and $T_{f_{\alpha}}\in \mathcal{I}(\mathcal{G}_w(U), \mathcal{G}_w(U))$ by Theorem \ref{interplay wrt arb OI}. Thus
	$I_{\mathcal{G}_w(U)}\in \overline{\mathcal{I}(\mathcal{G}_w(U), \mathcal{G}_w(U))}^{\tau_c}$ and hence $(d)$ follows.
	
	$(d)\implies (a)$: Since $E$ is complemented in $\mathcal{G}_w(U)$, cf. \cite[ Proposition 3.6]{DB1}, $(a)$ follows from $(d)$ and Proposition \ref{comple I-AP}.
	
	$(c)\implies (e)$: Since $I_U\in \mathcal{P}(^mE, F)\subset \mathcal{H}_w(U,F)$, $(e)$ follows from $(c)$.
	
	$(e) \implies (a)$: As $I_U\in  \overline{\mathcal{I}\circ \mathcal{H}_w(U, E)}^{\tau_{\mathcal{M}}}$, there exists a net $(f_{\alpha})\subset \mathcal{I}\circ \mathcal{H}_w(U, E)$ such that $f_{\alpha}\xrightarrow{\tau_{\mathcal{M}}}I_U$.
	Also, by Theorem \ref{topological isomorphism}, $T_{f_{\alpha}}\xrightarrow{\tau_c} T_{I_U}$ . Then proceeding as in the proof of Theorem \ref{I_U cap char}, $T_{f_{\alpha}}\circ S\xrightarrow{\tau_c}I_E$, where $S\in \mathcal{L}(E, \mathcal{G}_w(U))$. As $T_{f_{\alpha}}\in \mathcal{I}(\mathcal{G}_w(U), E)$ for each $\alpha$ in view of Theorem \ref{interplay wrt arb OI}, $E$ has the $\mathcal{I}-$AP by Theorem \ref{I-AP characterization}.
\end{proof}

\section*{Acknowledgement}
The first author would like to thank BITS-Pilani for the financial support received (Grant No. RIG/2019/0641).

\end{document}